\theoremstyle{definition}
\newtheorem{theorem}{Theorem}[section]
\newtheorem{proposition}{Proposition}[section]
\newtheorem{corollary}{Corollary}[section]
\newtheorem{lemma}{Lemma}[section]
\newtheorem{notation}{Notation}[section]
\newtheorem{remark}{Remark}[section]
\newtheorem{definition}{Definition}[section]
\begin{document}

\title{A note on the weak tracial Rokhlin property for finite group actions on simple unital $\rm{C}^*$-algebras}

\author{Xiaochun Fang}
\address{School of Mathematical Sciences, Tongji University, Shanghai 200092, China}
\email{xfang@tongji.edu.cn}

\author{Zhongli Wang}
\address{School of Mathematical Sciences, Tongji University, Shanghai 200092, China}
\email{1810411@tongji.edu.cn}

\date{Month, Day, Year}
\keywords{C*-algebra; Tracial Rokhlin property; Weak tracial Rokhlin property}
\date{\today}

\maketitle
\begin{abstract}
In this paper, we show that one of the conditions in the definition of weak tracial Rokhlin property for finite group actions on simple unital $\rm{C}^*$-algebras can be replaced by a seemingly weaker condition, or a seemingly stronger condition. As a corollary, this condition is redundant whenever the $\rm{C}^*$-algebra is not purely infinite. We also give a sufficient condition for the weak tracial Rokhlin property for finite group actions on simple unital $\rm{C}^*$-algebras to imply the tracial Rokhlin property.
\end{abstract}








\section{INTRODUCTION}
\label{sec1}

The Rokhlin property was originally used in ergodic theory for von Neumann algebras. The use of the Rokhlin property for finite group actions on $\rm{C}^*$-algebras dates back to \cite{MR647069,MR702960}. The systematic study of finite group actions on $\rm{C}^*$-algebras with the Rokhlin property was initiated by Izumi in \cite{MR2053753,MR2047851}. The Rokhlin property can be viewed as a regularity condition for group action, which can be used to show that various structural properties pass from a $\rm{C}^*$-algebra to its crossed product.

However, actions of finite groups with the Rokhlin property are very rare. Phillips introduced the tracial Rokhlin property for finite group actions on simple unital $\rm{C}^*$-algebras in \cite{MR2808327}. The tracial Rokhlin property is much more common, and is also useful in studying the structure of crossed products. For example, Phillips proved that the crossed product of a simple unital $\rm{C}^*$-algebra with tracial rank zero by a finite group action with the tracial Rokhlin property again has tracial rank zero(see \cite[Theorem 2.6]{MR2808327}).

Nevertheless, the tracial Rokhlin property still has restrictions. It requires the existence of projections. For example, the Jiang-Su algebra $\mathcal{Z}$ does not admit any action with the tracial Rokhlin property. Several weak versions of the tracial Rokhlin property in which one uses orthogonal positive elements instead of orthogonal projections were studied for finite group actions on simple unital $\rm{C}^*$-algebras, see \cite{MR2712082,MR2954514,MR3063095,MR3153204,MR4177290,MR4336489}. As an example, the permutation action of $S_n$ on the Jiang-Su algebra $\mathcal{Z}\cong \mathcal{Z}^{\otimes n}$ has the weak tracial Rokhlin property but it does not have the tracial Rokhlin property(see \cite[Example 5.10]{MR3063095}). The main use of the weak tracial Rokhlin property has been showing that the crossed product of a simple separable unital nuclear $\mathcal{Z}$-stable $\rm{C}^*$-algebra by a finite group action with the weak tracial Rokhlin property again is $\mathcal{Z}$-stable(see \cite[Corollary 5.7]{MR3063095}). Let us recall the definition of the weak tracial Rokhlin property.

\begin{definition}\label{def1}(\cite[Definition 3.2]{MR4336489})
	Let $A$ be a simple unital $\rm{C}^*$-algebra, let $G$ be a finite group, and let $\alpha:G\rightarrow \text{Aut}(A)$ be an action of $G$ on $A$.
	\begin{enumerate}[(1)]
		\item $\alpha$ is said to have the weak tracial Rokhlin property, if for any finite subset $F\subset A$, any $\varepsilon>0$, and any $x\in A_+$ with $\|x\|=1$, there exist orthogonal positive contractions $f_g\in A$ for $g\in G$, with $f=\sum_{g\in G}f_g$, such that:
		\begin{enumerate}[(a)]
			\item $\|f_ga-af_g\|<\varepsilon$ for all $g\in G$ and all $a\in F$;
			\item $\|\alpha_g(f_h)-f_{gh}\|<\varepsilon$ for all $g,h\in G$;
			\item $1-f\precsim x$;
			\item $\|fxf\|>1-\varepsilon$.
		\end{enumerate}
		\item $\alpha$ is said to have the tracial Rokhlin property if the orthogonal positive contractions in $(1)$ can be chosen to be orthogonal projections.
	\end{enumerate}
\end{definition}

If $A$ is purely infinite, then Condition (c) is automatic. If $A$ is finite, then Condition (d) is redundant, see \cite[Lemma 1.16]{MR2808327} for the tracial Rokhlin property and \cite[Lemma 2.9]{1408.5546} for the weak tracial Rokhlin property. However, without Condition (d), the trivial action on a purely infinite simple unital $\rm{C}^*$-algebra would have the weak tracial Rokhlin property. It is not clear that Condition (d) is really the right extra condition to impose. 

In the first part of this paper ,using the method in \cite[Proposition 9.5]{MR4215379}, we show that Condition (d) in Definition \ref{def1} can be replaced by a seemingly weaker condition, or a seemingly stronger condition. As a corollary, Condition (d) is redundant whenever $A$ is not purely infinite.

\begin{theorem}\label{the1}
	Let $A$ be a simple unital $\rm{C}^*$-algebra, let $G$ be a finite group, and let $\alpha:G\rightarrow \text{Aut}(A)$ be an action of $G$ on $A$. Then the following statements are equivalent:
	\begin{enumerate}[(1)]
		\item $\alpha$ has the weak tracial Rokhlin property (respectively, tracial Rokhlin property);
		\item For any finite subset $F\subset A$, any $\varepsilon>0$, and any $x\in A_+\setminus\{0\}$, there exist orthogonal positive contractions (respectively, orthogonal projections) $f_g\in A$ for $g\in G$ such that, with $f=\sum_{g\in G}f_g$, the following hold:
		\begin{enumerate}[(a)]
			\item $\|f_ga-af_g\|<\varepsilon$ for all $g\in G$ and all $a\in F$;
			\item $\|\alpha_g(f_h)-f_{gh}\|<\varepsilon$ for all $g,h\in G$;
			\item $1-f\precsim x$;
			\item $\|f\|>1-\varepsilon$.
		\end{enumerate}
		\item For any finite subset $F\subset A$, any $\varepsilon>0$, and any $x\in A_+\setminus\{0\}$, there exist orthogonal positive contractions (respectively, orthogonal projections) $f_g\in A$ for $g\in G$ such that, with $f=\sum_{g\in G}f_g$, the following hold:
		\begin{enumerate}[(a)]
			\item $\|f_ga-af_g\|<\varepsilon$ for all $g\in G$ and all $a\in F$;
			\item $\|\alpha_g(f_h)-f_{gh}\|<\varepsilon$ for all $g,h\in G$;
			\item $1-f\precsim x$;
			\item $\|faf\|>\|a\|-\varepsilon$ for all $a\in F$.
		\end{enumerate}
	\end{enumerate}
\end{theorem}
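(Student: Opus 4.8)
The plan is to prove the cycle of implications $(3)\Rightarrow(1)\Rightarrow(2)\Rightarrow(3)$, handling the positive–contraction and the projection versions in parallel; the only difference in the projection case is that $f$, being a sum of orthogonal projections, is itself a projection, so that $\|f\|=1$ as soon as $f\neq0$. Two of the three implications are formal. For $(3)\Rightarrow(1)$, given $F$, $\varepsilon$ and a test element $x\in A_+$ with $\|x\|=1$ as in Definition \ref{def1}, I would apply (3) to the finite set $F\cup\{x\}$ with the same $\varepsilon$ and the same $x$: condition (d) of (3) applied to $x\in F\cup\{x\}$ gives $\|fxf\|>\|x\|-\varepsilon=1-\varepsilon$, which is exactly condition (d) of Definition \ref{def1}, while (a)--(c) transfer verbatim. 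For $(1)\Rightarrow(2)$, given $F$, $\varepsilon$ and $x\in A_+\setminus\{0\}$, I would apply the weak tracial Rokhlin property to $F$, $\varepsilon$ and the normalized contraction $x/\|x\|$; since $x/\|x\|$ and $x$ generate the same hereditary subalgebra they are Cuntz equivalent, so $1-f\precsim x/\|x\|$ yields (c), and $\|f\|^2\ge\|f(x/\|x\|)f\|>1-\varepsilon$ gives $\|f\|>\sqrt{1-\varepsilon}>1-\varepsilon$, which is (d).

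The substance of the theorem is the implication $(2)\Rightarrow(3)$, for which I would follow the central–sequence method of \cite[Proposition 9.5]{MR4215379}. Fix $F$, $\varepsilon$ and $x\in A_+\setminus\{0\}$. Choose an increasing sequence of finite subsets $F_1\subseteq F_2\subseteq\cdots$ with $F\subseteq F_1$ and $\bigcup_n F_n$ dense in $A$, together with $\varepsilon_n\downarrow0$. For each $n$, applying hypothesis (2) to $F_n$, $\varepsilon_n$ and the fixed element $x$ produces orthogonal positive contractions (resp.\ projections) $f^{(n)}_g$ whose sum $f^{(n)}$ satisfies (a)--(d) of (2). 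Passing to the ultrapower $A_\omega$ along a free ultrafilter $\omega$, the tuple $f=(f^{(n)})_n$ defines a positive contraction (resp.\ projection) in the central sequence algebra $A_\omega\cap A'$: condition (a) together with density of $\bigcup_nF_n$ forces $[f,a]=0$ for all $a\in A$, condition (b) gives $\alpha^\omega_g(f_h)=f_{gh}$, and condition (d) gives $\|f\|=\lim_\omega\|f^{(n)}\|=1$. Note that the specific test element $x$ is kept fixed throughout, so that $1-f^{(n)}\precsim x$ for every $n$; the Cuntz condition (c) plays no role in what follows and is simply carried along.

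The crux is then the following purely norm–theoretic fact, which is where simplicity of $A$ enters: if $c\in A_\omega\cap A'$ is a positive contraction with $\|c\|=1$, then $\|ac\|=\|a\|$ for every $a\in A$. To prove it, set $B=C^*(A,c)\subseteq A_\omega$; since $c$ is central in $B$, the unital subalgebra $Z=C^*(c,1)\cong C(\sigma(c))$ lies in the center of $B$, so $B$ is a $C(\sigma(c))$-algebra. For $t\in\sigma(c)$ let $B_t$ be the corresponding fibre and $q_t\colon B\to B_t$ the quotient map, so that $q_t(c)=t\,1_{B_t}$ and $\|b\|=\sup_{t}\|q_t(b)\|$. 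The function $t\mapsto\|q_t(c)\|$ is upper semicontinuous on the compact set $\sigma(c)$ with supremum $\|c\|=1$, hence attains the value $1$ at some $t_0$; as $\|q_{t_0}(c)\|$ equals $t_0$ when $B_{t_0}\neq0$ and equals $0$ otherwise, we must have $t_0=1$ and $B_1\neq0$. The restriction $q_1|_A$ has kernel a proper ideal of the simple algebra $A$ (proper since $q_1(1_A)=1_{B_1}\neq0$), hence kernel $0$, so $q_1|_A$ is isometric; as $q_1(c)=1_{B_1}$ this gives $\|ac\|\ge\|q_1(ac)\|=\|q_1(a)\|=\|a\|$, and the reverse inequality is clear.

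Applying this fact to $c=f^2$ (a central positive contraction of norm $1$, equal to $f$ in the projection case) and using centrality to write $faf=af^2$, I obtain $\|faf\|=\|af^2\|=\|a\|$ in $A_\omega$ for every $a\in A$, i.e.\ $\lim_\omega\|f^{(n)}af^{(n)}\|=\|a\|$. For each $a\in F$ the set $\{n:\|f^{(n)}af^{(n)}\|>\|a\|-\varepsilon\}$ belongs to $\omega$; intersecting these finitely many sets with $\{n:\varepsilon_n<\varepsilon\}$ and using that $\omega$ is closed under finite intersections yields a single index $n$ for which $f^{(n)}$ satisfies (a)--(c) of (2) and $\|f^{(n)}af^{(n)}\|>\|a\|-\varepsilon$ for all $a\in F$, which is (3). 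I expect the main obstacle to be precisely the norm–theoretic fact above: one must resist arguing from approximate commutation with the finite set $F$ alone (which is false without simplicity, as $C([0,1])$ with $f(t)=t$ and a bump $a$ supported near $0$ shows, since then $\|faf\|=\|af^2\|$ can be arbitrarily small while $\|f\|=1$), and instead exploit that the limiting $f$ commutes with all of $A$, feeding in simplicity through the isometric embedding of $A$ into the fibre over the top of the spectrum of $f$.
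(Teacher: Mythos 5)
Your overall strategy for $(2)\Rightarrow(3)$ --- pass to an ultrapower along a free ultrafilter, observe that the limiting element $f$ is a norm-one positive contraction commuting with the relevant subalgebra and with the prescribed equivariance, and then use simplicity to conclude that $a\mapsto faf$ is isometric --- is exactly the paper's. The paper obtains the isometry by noting that the induced map into $A_\omega$ is a completely positive order zero map of norm one and invoking Lemma~\ref{lem2}; your $C(\sigma(c))$-algebra fibre argument (evaluating at the top point $t=1$ of the spectrum, where the fibre is nonzero by upper semicontinuity of the fibrewise norms, and using simplicity to see that $A$ embeds isometrically into that fibre) is a correct, self-contained proof of precisely the special case of that lemma you need, and is a reasonable substitute. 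The implications $(1)\Rightarrow(2)$ and $(3)\Rightarrow(1)$ are handled the same way in both (normalize $x$, respectively enlarge $F$ to $F\cup\{x\}$), and your parallel treatment of the projection case is fine.

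There is one genuine gap: in $(2)\Rightarrow(3)$ you choose finite sets $F_n$ with $\bigcup_n F_n$ dense in $A$, which is only possible when $A$ is separable, whereas the theorem is stated for an arbitrary simple unital $\mathrm{C}^*$-algebra. The fix is the one the paper uses: since simplicity is a separably inheritable property, one can choose a unital separable \emph{simple} $\mathrm{C}^*$-subalgebra $B\subseteq A$ containing $F$, take $\bigcup_n F_n$ dense in $B$, and run your fibre argument with $C^*(B,c)$ in place of $C^*(A,c)$; the kernel of $q_1|_B$ is then a proper ideal of the simple algebra $B$, so $q_1|_B$ is isometric and $\|faf\|=\|a\|$ for all $a\in B\supseteq F$, which is all that is needed. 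Note that a separable subalgebra containing $F$ need not itself be simple, so the separable-inheritability step cannot be omitted; with that adjustment your proof is complete.
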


As one could expect, for finite group actions on $\rm{C}^*$-algebras which have sufficiently many projections, the weak tracial Rokhlin property is equivalent to the tracial Rokhlin property. This was proved in \cite[Theorem 1.9]{MR3347172} for $\rm{C}^*$-algebras with tracial rank zero, and in \cite[Theorem 3.11]{MR4177290} for unital Kirchberg algebras. In the second part of this paper, we give a class of simple unital $\rm{C}^*$-algebras which contains both the class of $\rm{C}^*$-algebras with tracial rank zero and the class of unital Kirchberg algebras. For finite group actions on $\rm{C}^*$-algebras in this class, the weak tracial Rokhlin property implies the tracial Rokhlin property.

\begin{theorem}\label{the2}
	Let $A$ be an infinite-dimensional simple unital $\rm{C}^*$-algebra, let $G$ be a finite group, let $\alpha:G\rightarrow \text{Aut}(A)$ be an action of $G$ on $A$, and let $\omega$ be a free ultrafilter on $\mathbb{N}$. Suppose that for any finite subset $F\subset A$, any $\varepsilon>0$, and any $x\in A_+\setminus\{0\}$, there exist a separable $C^*$-subalgebra $B\subset A$ and a positive element $c\in A\cap B'$ with $\|c\|=1$, such that:
	\begin{enumerate}[(1)]
		\item $\|ca-ac\|<\varepsilon$ for all $a\in F$;
		\item $ca\in_\varepsilon B$ for all $a\in F$;
		\item $1-c\precsim x$;
		\item $A_\omega\cap B'$ has real rank zero.
	\end{enumerate}
	Then $\alpha$ has the tracial Rokhlin property if and only if $\alpha$ has the weak tracial Rokhlin property.
\end{theorem}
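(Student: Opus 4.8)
\section{Proof strategy for Theorem \ref{the2}}

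The plan is to prove the nontrivial implication: that the weak tracial Rokhlin property implies the tracial Rokhlin property. The reverse implication is immediate, since orthogonal projections are orthogonal positive contractions and a nonzero projection has norm $1$, so condition (d) of Definition \ref{def1} holds automatically. By Theorem \ref{the1} it suffices to verify characterization (2) with projections. So I fix a finite set $F\subset A$ of self-adjoint contractions, which I enlarge to the $G$-invariant set $\tilde F=\bigcup_{g\in G}\alpha_g(F)$, together with $\varepsilon>0$ and $x\in A_+\setminus\{0\}$, and I must produce orthogonal projections $e_g$ satisfying (a)--(d). Since $A$ is simple, I first fix $x_0\in A_+\setminus\{0\}$ admitting two mutually orthogonal copies each Cuntz-subequivalent to $x$, reserving room to absorb two Cuntz-small errors into condition (c) at the end.

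I would then pass to the ultrapower $A_\omega$, on which $\alpha$ induces $\alpha_\omega$ and into which $A$ embeds as constant sequences. Applying the weak tracial Rokhlin property along a sequence of finite sets increasing to a dense subset of $A$, with tolerances tending to $0$ and comparison element $x_0$, yields orthogonal positive contractions $\bar f_g\in A_\omega\cap A'$ with $\alpha_{\omega,g}(\bar f_h)=\bar f_{gh}$ \emph{exactly}, with $\|\bar f\|=1$ for $\bar f=\sum_g\bar f_g$ (from (d)), and with $1-\bar f\precsim x_0$ in $A_\omega$ (from (c)); note that equivariance forces $\|\bar f_{g_0}\|=1$ for the identity $g_0\in G$. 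Feeding $\tilde F,\varepsilon,x_0$ into the hypothesis supplies a separable $B\subset A$ and a central positive contraction $c$ with $\|c\|=1$ satisfying (1)--(4). In particular $A_\omega\cap A'\subseteq A_\omega\cap B'=:D$, and $D$ has real rank zero, hence so does the hereditary subalgebra $E:=\overline{\bar f_{g_0}D\,\bar f_{g_0}}$.

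The core construction rounds $\bar f_{g_0}$ to a projection inside $E$. Using real rank zero, I choose a projection $\bar p\in E$ that is an approximate unit for $\bar f_{g_0}$ (so $(\bar f_{g_0}-\delta)_+\precsim\bar p\precsim\bar f_{g_0}$), and set $\bar e_g:=\alpha_{\omega,g}(\bar p)$. These are genuine projections with $\alpha_{\omega,g}(\bar e_h)=\bar e_{gh}$ by construction; moreover $\bar p\in\overline{\bar f_{g_0}D\bar f_{g_0}}$ forces $\bar e_g\bar f_h=0$ for $h\ne g$, whence $\bar e_g$ is orthogonal to $\bar e_h\in\overline{\bar f_h D\bar f_h}$, giving orthogonality; and $\|\bar f_{g_0}\|=1>\delta$ gives $\bar p\ne0$, so $\|\bar e\|=1$ for $\bar e=\sum_g\bar e_g$. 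For condition (c), the projection $\bar e$ dominates the part of $\bar f$ above level $\delta$, so $1-\bar e\precsim x_0$ follows from $1-\bar f\precsim x_0$ by the usual Cuntz comparison arguments in the real rank zero algebra $D$.

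The main obstacle is condition (a). Although $\bar p$ commutes with $B$, the translate $\bar e_g=\alpha_{\omega,g}(\bar p)$ only commutes with $\alpha_g(B)$, and $F$ need not lie in any $\alpha_g(B)$. This is precisely where $c$ enters: for $a\in F$ the element $c\,\alpha_{g^{-1}}(a)$ lies $\varepsilon$-close to $B$ and $c$ almost commutes with $\alpha_{g^{-1}}(a)$, so $\bar p$ almost commutes with $c\,\alpha_{g^{-1}}(a)$; translating, $\alpha_g(c)\,[\bar e_g,a]$ has order $\varepsilon$, i.e.\ $[\bar e_g,a]$ is small off a spectral region of $\alpha_g(c)$ whose complement is Cuntz-dominated by $\alpha_g(1-c)\precsim\alpha_g(x_0)$. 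To convert this ``commutation off a Cuntz-small set'' into genuine norm commutation, I would arrange $\bar p$ to commute with $c$ and then cut $\bar e_g$ down by the commuting spectral projection $Q_g:=\alpha_g(q)$ of $\alpha_g(c)$ near $1$; the cut-downs $\bar e_gQ_g$ remain orthogonal, equivariant, genuine projections commuting with $F$ up to a controllable error, and the discarded piece, dominated by $\alpha_g(1-c)\precsim\alpha_g(x_0)$, is absorbed into (c) via the two reserved copies of $x_0$. I expect the delicate points to be that sharp spectral projections of the only \emph{approximately} commuting pair $c,a$ need not almost commute with $a$ --- which I would address by driving the hypothesis tolerance to $0$ along the sequence so that the relevant cut-down elements become exactly central in $A_\omega$ --- and the bookkeeping that preserves orthogonality, equivariance, and the projection property through the cut. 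With condition (a) secured in $A_\omega$, I would finally represent $\{\bar e_gQ_g\}$ by a sequence in $A$ and, for $\omega$-many indices, perturb approximate orthogonal projections to genuine ones by functional calculus, transporting (a)--(d) to $A$ and yielding the tracial Rokhlin property; the remaining steps are routine Cuntz-semigroup manipulation and standard ultrapower extraction.
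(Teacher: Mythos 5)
Your overall strategy --- pass to $A_\omega$, use real rank zero of $A_\omega\cap B'$ to round $f_{g_0}$ to a projection $\bar p$ in a hereditary subalgebra, and translate by $\alpha_\omega$ to get an exactly equivariant orthogonal family --- is the same as the paper's, and your treatment of equivariance, orthogonality and condition (c) would go through. The genuine gap is in your treatment of condition (a), which you correctly identify as the main obstacle but do not resolve. You place $\bar p$ in $\overline{\bar f_{g_0}D\bar f_{g_0}}$ with $D=A_\omega\cap B'$, so $\bar p$ commutes with $B$ but has no a priori relation to $c$; your proposed repair is to cut $\bar e_g$ by a spectral projection $Q_g$ of $\alpha_g(c)$ near $1$ and to make the relevant elements exactly central by ``driving the hypothesis tolerance to $0$.'' Neither half of this works as stated: a sharp spectral projection of $c$ need not exist in $A_\omega$ (real rank zero of $A_\omega\cap B'$ only yields approximate ones, and $\bar e_gQ_g$ is a projection only if the two factors commute, which you have not arranged); and the hypothesis returns a \emph{different} pair $(B,c)$ for each tolerance, so there is no limiting $B$ for which condition (4) survives --- in particular $ca\in_\varepsilon B$ cannot be upgraded to exact membership by shrinking $\varepsilon$.

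The paper's fix is a small but essential change to your construction: set $c_1=(c-\varepsilon_1)_+$ and $c_2=\varphi(c)$ with $\varphi=1$ on $[\varepsilon_1,1]$, and choose the projection $p_1$ not in $\overline{f_{g_0}Df_{g_0}}$ but in the smaller hereditary subalgebra $\overline{c_1f_{g_0}Df_{g_0}c_1}$ (nonzero because $\|f_{g_0}c_1f_{g_0}\|=\|c_1\|>0$). Then $c_2p_1=p_1=p_1c_2$ holds \emph{exactly}, so for $a\in F$ one writes $p_1a-ap_1=p_1c_2a-ac_2p_1$, replaces $c_2a$ by a nearby element of $B$ (Lemma \ref{lem3} converts $ca\in_\delta B$ into $c_2a\in_{\varepsilon_1}B$), and uses $p_1\in B'$; no spectral projections of $c$ and no products of non-commuting projections are needed. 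Two further points to repair: your order of quantifiers requires $\bar f_g\in A_\omega\cap A'$, which needs $A$ separable --- instead choose $(B,c)$ first and then take $f_g\in A_\omega\cap(F\cup B\cup\{c\})'$ via Proposition \ref{pro1}; and your ``two reserved copies'' of $x$ are not enough for condition (c), since the discarded pieces contribute $\bigoplus_{g\in G}\alpha_g(1-c)$ on top of $1-f$, so one needs $y$ with $y\oplus\bigoplus_{g\in G}\alpha_g(y)\precsim x$, i.e.\ $|G|+1$ copies, which the paper extracts from simplicity and non-type-I-ness.
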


This paper is organized as follows. In Section \ref{sec2}, we present some definitions and known results about Cuntz comparison, order zero map, approximation and ultrapower. Then we give our main results in Section \ref{sec3}.

\section{PRELIMINARIES AND DEFINITIONS}
\label{sec2}
\begin{notation} We use the following notation in this paper.
	\begin{enumerate}[(1)]
		\item For a $\rm{C}^*$-algebra $A$, let $A_+$ denote the set of all positive elements in $A$, $A^1$ denote the closed unit ball of $A$, and $A_+^1$ denote the set of all positive contractions in $A$.
		\item Let $A$ be a $\rm{C}^*$-algebra, $F\subset A$ be a subset, and $a\in A$. We write $a\in_\varepsilon F$ if there is $b\in F$ such that $\|a-b\|<\varepsilon$.
		\item Let $A$ be a $\rm{C}^*$-algebra. For $a,b\in A_+$, we say that $a$ is Cuntz subequivalent to $b$ in $A$, written $a\precsim_A b$, if there is a sequence $(v_n)_{n=1}^\infty$ in $A$ such that $\lim_{n\rightarrow \infty}\|a-v_nbv_n^*\|=0$. We say that $a$ is Cuntz equivalent to $b$, written $a\sim_A b$ if both $a\precsim_A b$ and $b\precsim_A a$. The relation $\sim_A$ is an equivalence relation. When there is no confusion about the algebra $A$, we suppress it in the notation.
		\item Let $A$ be a $\rm{C}^*$-algebra, $a\in A_+$, and $\varepsilon>0$. Define a continuous function $f_\varepsilon:[0,\infty)\rightarrow [0,\infty)$ by
		\begin{equation*}
			f_\varepsilon(t):= \begin{cases}
				0 \quad & 0\leq t\leq \varepsilon\\
				t-\varepsilon \quad & \varepsilon < t.
			\end{cases}
		\end{equation*}
		We use $(a-\varepsilon)_+$ to denote $f_\varepsilon(a)$. By the functional calculus, it follows in a straightforward manner that $((a-\varepsilon_1)_+-\varepsilon_2)_+=(a-(\varepsilon_1+\varepsilon_2))_+$ for all $\varepsilon_1,\varepsilon_2>0$(see \cite[Lemma 2.5(i)]{MR1759891}).
	\end{enumerate}	
\end{notation}

The following facts abous Cuntz subequivalence are well known. Part (1) is \cite[Lemma 2.2]{MR1906257}. Part (2) is \cite[Lemma 2.7]{MR4336489}. Part (3) is \cite[Lemma 1.7]{1408.5546}.

\begin{lemma}\label{lem1} Let $A$ be a $\rm{C}^*$-algebra.
	\begin{enumerate}[(1)]
		\item Let $a,b\in A_+$, and let $\varepsilon>0$. If $\|a-b\|<\varepsilon$, then there is a contraction $d\in A$ such that $(a-\varepsilon)_+=dbd^*$. In particular, $(a-\varepsilon)_+\precsim b$.
		\item Let $a,b\in A_+$, and let $\delta>0$. If $a\precsim (b-\delta)_+$, then there exists a sequence $(v_n)_{n=1}^\infty$ in $A$ such that $\|a-v_nbv_n^*\|\rightarrow 0$ and $\|v_n\|\leq \|a\|^{1/2}\delta^{-1/2}$ for every $n\in \mathbb{N}$.
		\item Let $a,b\in A$ satisfy $0\leq a\leq b$. Let $\varepsilon>0$. Then $(a-\varepsilon)_+\precsim (b-\varepsilon)_+$.
	\end{enumerate}
	
\end{lemma}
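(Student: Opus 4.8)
The three assertions are standard facts about Cuntz comparison, and I would handle them by a common mechanism: continuous functional calculus together with the resolvent approximation which, for $0\le x\le y$, produces $v_n=x^{1/2}(y+1/n)^{-1/2}$ with $v_nyv_n^*=x^{1/2}y(y+1/n)^{-1}x^{1/2}$ and hence $\|x-v_nyv_n^*\|=\|x^{1/2}\gamma_n(y)x^{1/2}\|\le\|y\gamma_n(y)\|\le 1/n$, where $\gamma_n(t)=(nt+1)^{-1}$ and the first inequality uses $x\le y$. Parts (2) and (3) I can prove outright from this; the only genuinely delicate point is the \emph{exact} factorization in part (1), for which I would appeal to the cited work.

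\emph{Part (1).} Put $\mu=\|a-b\|<\varepsilon$, so that $a-\mu 1\le b$ as self-adjoint elements. I would introduce the function $h(t)=(t-\varepsilon)_+^{1/2}(t-\mu)_+^{-1/2}$, set to $0$ on $[0,\varepsilon]$; it is continuous with $0\le h\le 1$, and the scalar identity $h(t)^2(t-\mu)=(t-\varepsilon)_+$ gives $h(a)(a-\mu 1)h(a)=(a-\varepsilon)_+$. Conjugating $a-\mu 1\le b$ by $h(a)$ then yields $(a-\varepsilon)_+\le h(a)bh(a)$, and since $h(a)bh(a)=(h(a)b^{1/2})(h(a)b^{1/2})^*\sim b^{1/2}h(a)^2b^{1/2}\precsim b$, the ``in particular'' clause $(a-\varepsilon)_+\precsim b$ follows at once. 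Upgrading the inequality $(a-\varepsilon)_+\le h(a)bh(a)$ to an \emph{exact} factorization $(a-\varepsilon)_+=dbd^*$ with a contraction $d$ is the one step I would not reprove, and I regard it as the main obstacle: majorization by itself does not produce an exact factorization (already in $C[0,1]$ there are $0\le c\le z$ admitting no continuous $e$ with $c=eze^*$), and the spectral gap created by the cut $(\,\cdot\,-\varepsilon)_+$ is precisely what rescues it. This is Kirchberg--R\o rdam \cite[Lemma 2.2]{MR1906257}, which I would cite.

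\emph{Part (2).} From $a\precsim (b-\delta)_+$ choose $w_n$ with $w_n(b-\delta)_+w_n^*\to a$. Let $e(t)=(t-\delta)_+^{1/2}t^{-1/2}$ (with $e(0)=0$), a contraction vanishing on $[0,\delta]$, and set $v_n=w_ne(b)$. Since $e(t)^2t=(t-\delta)_+$, we get $v_nbv_n^*=w_n(b-\delta)_+w_n^*\to a$. The scalar inequality $(t-\delta)_+-\delta\,e(t)^2=(t-\delta)_+(t-\delta)/t\ge 0$ gives $(b-\delta)_+\ge\delta\,e(b)^2$, whence $v_nbv_n^*\ge\delta\,v_nv_n^*$ and therefore $\delta\|v_n\|^2\le\|v_nbv_n^*\|\to\|a\|$. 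A harmless rescaling $v_n\mapsto c_nv_n$ with $c_n\le 1$, $c_n\to1$, then enforces $\|v_n\|\le\|a\|^{1/2}\delta^{-1/2}$ for every $n$ while keeping $v_nbv_n^*\to a$. This is \cite[Lemma 2.7]{MR4336489}.

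\emph{Part (3).} With $d_n=a^{1/2}(b+1/n)^{-1/2}$ the resolvent estimate above gives $\|a-d_nbd_n^*\|\le 1/n$, this being where $a\le b$ enters. Writing $b=(b-\varepsilon)_++m(b)$ with $m(t)=\min(t,\varepsilon)$ and using $a\le b$ once more to estimate $\|d_nm(b)d_n^*\|\le\|b\,m(b)(b+1/n)^{-1}\|\le\varepsilon$, I obtain $\|a-d_n(b-\varepsilon)_+d_n^*\|\le 1/n+\varepsilon$. For every $\varepsilon'>\varepsilon$ and $n$ large, part (1) then gives $(a-\varepsilon')_+\precsim d_n(b-\varepsilon)_+d_n^*\precsim (b-\varepsilon)_+$; letting $\varepsilon'\downarrow\varepsilon$ and using that $(a-\varepsilon')_+\to (a-\varepsilon)_+$ in norm, so that part (1) compares the two cut-downs, yields $(a-\varepsilon)_+\precsim (b-\varepsilon)_+$. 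The only subtlety here is that $(\,\cdot\,-\varepsilon)_+$ is scalar-monotone but not operator monotone, which is why $a\le b$ cannot be pushed to $(a-\varepsilon)_+\le (b-\varepsilon)_+$ and forces the limiting step. This is \cite[Lemma 1.7]{1408.5546}.
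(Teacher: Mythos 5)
Your proposal is correct, but note that the paper does not actually prove this lemma at all: it is stated as a collection of known facts with citations only --- part (1) to Kirchberg--R\o rdam \cite[Lemma 2.2]{MR1906257}, part (2) to \cite[Lemma 2.7]{MR4336489}, part (3) to \cite[Lemma 1.7]{1408.5546}. So your route genuinely differs in that you supply the arguments the paper defers to the literature. Your proofs check out: the resolvent trick $v_n=x^{1/2}(y+1/n)^{-1/2}$ with $\|x-v_nyv_n^*\|=\|\gamma_n(y)^{1/2}x\gamma_n(y)^{1/2}\|\le\|y\gamma_n(y)\|\le 1/n$ for $0\le x\le y$ correctly drives (3) via the splitting $b=(b-\varepsilon)_+ + \min(b,\varepsilon)$ and the estimate $\|d_n\min(b,\varepsilon)d_n^*\|\le\varepsilon$ (again via $a\le b$); in (2) the identities $e(t)^2t=(t-\delta)_+$ and $\delta e(t)^2\le(t-\delta)_+$ give both the convergence and the norm bound; and in (1) conjugating $a-\mu 1\le b$ by $h(a)$ is a correct standalone derivation of the ``in particular'' clause, with the exact contractive factorization $(a-\varepsilon)_+=dbd^*$ rightly identified as the one nonelementary point and cited to precisely the source the paper uses (your $C[0,1]$ remark that $0\le c\le z$ need not factor exactly is also correct). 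Two steps are compressed but standard and sound: at the end of (3) you need the closure fact that $((a-\varepsilon)_+-\eta)_+\precsim(b-\varepsilon)_+$ for every $\eta>0$ forces $(a-\varepsilon)_+\precsim(b-\varepsilon)_+$, which is immediate from the definition since a $v$ with $\|((a-\varepsilon)_+-\eta)_+-v(b-\varepsilon)_+v^*\|<\eta$ gives $\|(a-\varepsilon)_+-v(b-\varepsilon)_+v^*\|<2\eta$; and in (2) your rescaling uses $\limsup_n\|v_n\|\le\|a\|^{1/2}\delta^{-1/2}$, so the degenerate case $a=0$ should be dispatched separately by taking $v_n=0$. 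Neither is a gap; the trade-off is simply that the paper's citations keep its preliminaries short, while your version makes the lemma essentially self-contained with a single, well-justified black box.
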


The following definition,  withou Condition (d) in Definition \ref{def1} but requiring $\|f_g\|=1$, was introduced by Hirshberg and Orovitz in \cite{MR3063095} under the name generalized tracial Rokhlin property.

\begin{definition}\label{def2}(\cite[Definition 5.2]{MR3063095})
	Let $A$ be a simple unital $\rm{C}^*$-algebra, let $G$ be a finite group, and let $\alpha:G\rightarrow \text{Aut}(A)$ be an action of $G$ on $A$. Then $\alpha$ is said to have the generalized tracial Rokhlin property, if for any finite subset $F\subset A$, any $\varepsilon>0$, and any $x\in A_+\setminus\{0\}$, there exist normalized orthogonal positive elements $f_g\in A$ for $g\in G$, with $f=\sum_{g\in G}f_g$, such that:
	\begin{enumerate}[(1)]
		\item $\|f_ga-af_g\|<\varepsilon$ for all $g\in G$ and all $a\in F$;
		\item $\|\alpha_g(f_h)-f_{gh}\|<\varepsilon$ for all $g,h\in G$;
		\item $1-f\precsim x$.
	\end{enumerate}
\end{definition}

\begin{definition}(\cite[Definition 2.3]{MR2545617})
	Let $A,B$ be $\rm{C}^*$-algebras, and let $\varphi:A\rightarrow B$ be a completely positive map. Then $\varphi$ is said to have order zero, if $\varphi(a)\varphi(b)=0$ for all $a,b\in A_+$ with $ab=0$.
\end{definition}

The following lemma is part of \cite[Proposition 5.3]{MR4464578}.

\begin{lemma}\label{lem2}(\cite[Proposition 5.3]{MR4464578})
	Let $A$ and $B$ be $\rm{C}^*$-algebras, and let $\varphi:A\rightarrow B$ be a completely positive order zero map. If $A$ is simple, then $\|\varphi(a)\|=\|\varphi\|\cdot \|a\|$ for all $a\in A$.
\end{lemma}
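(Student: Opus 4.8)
The plan is to exploit the structure theorem for completely positive order zero maps together with the simplicity of $A$ to reduce the asserted norm identity to a computation in a commutative tensor factor. Since the conclusion is trivial when $\varphi=0$, I would first assume $\varphi\neq0$ and, after replacing $\varphi$ by $\varphi/\|\varphi\|$, reduce to the case where $\varphi$ is contractive with $\|\varphi\|=1$; scaling by a positive constant preserves complete positivity and the order zero condition, so it then suffices to prove that $\varphi$ is isometric, i.e.\ that $\|\varphi(a)\|=\|a\|$ for all $a\in A$.

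By the Winter--Zacharias structure theorem for order zero maps, there is a $*$-homomorphism $\Phi\colon C_0((0,1])\otimes A\to B$ with $\Phi(\iota\otimes a)=\varphi(a)$ for all $a\in A$, where $\iota\in C_0((0,1])$ is the function $\iota(t)=t$. Identifying $C_0((0,1])\otimes A$ with $C_0((0,1],A)$, the next step is to analyze $\ker\Phi$, and this is exactly where simplicity of $A$ enters. For any locally compact Hausdorff space $X$ and any simple $A$, the closed ideals of $C_0(X,A)$ are precisely the subalgebras $C_0(U,A)=\{f: f|_{X\setminus U}=0\}$ for $U\subseteq X$ open: for a closed ideal $I$ and a point $x$, the closure of $\{f(x):f\in I\}$ is a closed ideal of the simple algebra $A$, hence $0$ or all of $A$, and letting $U$ be the set of points where it is all of $A$ gives $I=C_0(U,A)$. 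Applying this with $X=(0,1]$ yields $\ker\Phi=C_0(U,A)$ for some open $U$, so that $\Phi$ factors through an injective, hence isometric, $*$-homomorphism $\overline{\Phi}\colon C_0(Y,A)\to B$, where $Y=(0,1]\setminus U$ is closed and nonempty (nonempty because $\varphi\neq0$).

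With this factorization in hand the norm identity becomes a direct calculation. Writing $q\colon C_0((0,1],A)\to C_0(Y,A)$ for the restriction map, I would compute, for every $a\in A$,
\[
\|\varphi(a)\|=\|\overline{\Phi}(q(\iota\otimes a))\|=\|q(\iota\otimes a)\|_{C_0(Y,A)}=\sup_{t\in Y}\|ta\|=\Bigl(\sup_{t\in Y}t\Bigr)\|a\|,
\]
where the second equality uses that $\overline{\Phi}$ is isometric. Setting $s=\sup_{t\in Y}t\in(0,1]$, this says $\|\varphi(a)\|=s\|a\|$ for all $a$; taking the supremum over $\|a\|\le1$ forces $s=\|\varphi\|=1$, whence $\|\varphi(a)\|=\|a\|$. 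Undoing the initial rescaling gives $\|\varphi(a)\|=\|\varphi\|\cdot\|a\|$ in general.

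The only genuinely nontrivial point is the identification of $\ker\Phi$ with $C_0(U,A)$: this is precisely where simplicity is used, guaranteeing that the kernel is governed by an open subset of the one-dimensional cone parameter rather than by a more intricate piece of $\mathrm{Prim}(A)$. Once the kernel is pinned down, the isometry of a $*$-homomorphism on its faithful image does the rest, and the supremum $\sup_{t\in Y}t$ is automatically equal to $\|\varphi\|$ because it already controls the norm of $\varphi$ on the unit ball.
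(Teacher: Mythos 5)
The paper does not actually prove this lemma: it is imported verbatim from \cite[Proposition 5.3]{MR4464578}, so there is no internal proof to compare against, and your proposal has to stand on its own. It does. Your route --- normalize to a cpc order zero map, invoke the Winter--Zacharias correspondence to get a $*$-homomorphism $\Phi\colon C_0((0,1])\otimes A\to B$ with $\Phi(\iota\otimes a)=\varphi(a)$, identify $\ker\Phi$ with $C_0(U,A)$ using simplicity of $A$, and then read off $\|\varphi(a)\|=\bigl(\sup_{t\in Y}t\bigr)\|a\|$ from the isometric injection $C_0(Y,A)\hookrightarrow B$ --- is correct, and it is essentially the standard argument for this fact (the cited source likewise goes through the order zero structure theorem). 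Two small points deserve tightening. First, the ideal classification is the one place you compress: the inclusion $I\subseteq C_0(U,A)$ and the openness of $U$ follow immediately from your fiber argument, but the converse inclusion $C_0(U,A)\subseteq I$ requires a compactness/partition-of-unity approximation (or a citation to the Tomiyama-type result that closed ideals of $C_0(X)\otimes A$, $A$ simple, are exactly $C_0(U)\otimes A$ --- unproblematic since $C_0(X)$ is nuclear, so there is no tensor-norm ambiguity even for non-nuclear simple $A$). Second, note that the Winter--Zacharias correspondence is stated for \emph{contractive} order zero maps, so your initial rescaling to $\|\varphi\|=1$ is not merely cosmetic but is what licenses the appeal to that theorem; you do perform it, so the logic is sound, just worth making the dependence explicit. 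With those two clarifications your argument is a complete and self-contained proof of the lemma.
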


\begin{lemma}\label{lem4}(\cite[Lemma 2.6]{1408.5546})
	Let $A$ be a simple $\rm{C}^*$-algebra, and let $B\subset A$ be a non-zero hereditary subalgebra. Let $n\in \mathbb{N}$, and let $a_1,a_2,\cdots,a_n\in A_+\setminus\{0\}$. Then there exists $b\in B_+\setminus\{0\}$ such that $b\precsim a_j$ for $j=1,2,\cdots,n$.
\end{lemma}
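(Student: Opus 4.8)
The plan is to reduce everything to a single two-corner construction coming from simplicity, and then run an induction on $n$ in which the shrinking hereditary subalgebra is controlled by a norm domination rather than by an abstract membership. Throughout I would use two elementary consequences of the hypotheses: first, if $0\le c\le\lambda a$ for some $\lambda>0$, then for each $\varepsilon>0$ Lemma \ref{lem1}(3) gives $(c-\varepsilon)_+\precsim(\lambda a-\varepsilon)_+\sim(a-\varepsilon/\lambda)_+\precsim a$, and letting $\varepsilon\to0$ yields $c\precsim a$; second, the standard Cuntz identity $xx^*\sim x^*x$.

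The heart of the argument is the claim (\textbf{$\ast$}): for any $e,c\in A_+\setminus\{0\}$ there is a nonzero $b\in A_+$ with $b\le\lambda e$ for some $\lambda>0$ and $b\precsim c$. To prove it I would first use simplicity to see that $e^{1/2}Ac^{1/2}\ne0$: since $c$ is full, $\overline{AcA}=A$, so $eAc=0$ would give $e\,\overline{AcA}=eA=0$ and hence $e=0$; and $eAc\ne0$ forces $e^{1/2}Ac^{1/2}\ne0$ because any nonzero $exc$ factors as $e^{1/2}(e^{1/2}xc^{1/2})c^{1/2}$. Pick $z$ with $w:=e^{1/2}zc^{1/2}\ne0$. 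Then $ww^*=e^{1/2}(zcz^*)e^{1/2}\le\|zcz^*\|\,e$ and $w^*w=c^{1/2}(z^*ez)c^{1/2}\le\|z^*ez\|\,c$ are both nonzero, so taking $b:=ww^*$ and combining $ww^*\sim w^*w\precsim c$ with the domination remark proves (\textbf{$\ast$}).

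Next I would prove, by induction on $n$, the slightly strengthened statement (P$_n$): for any $e_0\in A_+\setminus\{0\}$ and any $a_1,\dots,a_n\in A_+\setminus\{0\}$ there is a nonzero $b\in A_+$ with $b\le\lambda e_0$ for some $\lambda>0$ and $b\precsim a_j$ for all $j$. The base case (P$_1$) is exactly (\textbf{$\ast$}). For the inductive step, apply (\textbf{$\ast$}) to $e_0$ and $a_n$ to obtain a nonzero $b_n\le\lambda_n e_0$ with $b_n\precsim a_n$, then apply (P$_{n-1}$) with $b_n$ in place of $e_0$ to obtain a nonzero $b\le\mu b_n$ with $b\precsim a_j$ for $j<n$. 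Chaining the dominations gives $b\le\mu\lambda_n e_0$, and $b\le\mu b_n$ gives $b\precsim b_n\precsim a_n$, so $b$ witnesses (P$_n$). Lemma \ref{lem4} then follows by choosing any nonzero $e_0\in B_+$: the domination $b\le\lambda e_0$ with $e_0\in B$ and $B$ hereditary forces $b\in B$, while (P$_n$) gives $b\precsim a_j$ for all $j$.

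The main thing to get right is the bookkeeping: carrying the norm domination $b\le\lambda e$ (rather than merely $b\in\overline{eAe}$) is precisely what lets the shrinking-corner induction close without needing a separate "every positive element of $\overline{b_nAb_n}$ is Cuntz-below $b_n$" lemma. The only genuinely nonelementary input is the use of simplicity to guarantee $e^{1/2}Ac^{1/2}\ne0$, and I expect that to be the single delicate point; everything else is Lemma \ref{lem1}(3) together with the identity $xx^*\sim x^*x$.
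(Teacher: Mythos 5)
Your proof is correct. The paper gives no argument of its own for this lemma (it is quoted from \cite[Lemma 2.6]{1408.5546}), and your argument is essentially the standard proof from that source: simplicity yields a nonzero two-corner element $w=e^{1/2}zc^{1/2}$, so that $ww^*$ is dominated by a multiple of $e$ while $ww^*\sim w^*w\precsim c$, and one then inducts by shrinking the corner; your only (harmless) variation is to carry the norm domination $b\le\lambda e_0$ through the induction rather than passing to the hereditary subalgebra $\overline{b_nAb_n}$ and invoking the standard fact that its positive elements are Cuntz-below $b_n$.
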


We need the following approximation lemma. Part (1) is \cite[Lemma 1.5]{MR4078704}. Part (2) is \cite[Lemma 1.6]{MR4078704}.

\begin{lemma}\label{lem3} Suppose $f:[0,1]\rightarrow \mathbb{C}$ is continuous.
	\begin{enumerate}[(1)]
		\item For every $\varepsilon>0$ there exists $\delta>0$ such that whenever $A$ is a $\rm{C}^*$-algebra and $a,b\in A$ satisfy
		\[
		\|a\|\leq 1, 0\leq b\leq 1, \text{ and } \|[b,a]\|<\delta,
		\]
		then $\|[f(b),a]\|<\varepsilon$.
		\item If $f(0)=0$, then for every $\varepsilon>0$ there exists $\delta>0$ such that whenever $A$ is a $\rm{C}^*$-algebra, $B\subset A$ is a subalgebra, and $a\in A,b\in B$ satisfy
		\[
		\|a\|\leq 1, 0\leq b\leq 1, \text{ and } dist(ba,B)<\delta,
		\]
		then $dist(f(b)a,B)<\varepsilon$.
	\end{enumerate}
\end{lemma}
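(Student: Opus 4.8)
The plan is to reduce both statements to the case of polynomials via the Weierstrass approximation theorem, exploiting that the constants produced depend only on the approximating polynomial and not on the ambient $\rm{C}^*$-algebra $A$ or the particular elements $a,b$. This uniformity is precisely what makes a single $\delta$ work simultaneously across all $A$, and it is the only point that requires a little care; the estimates themselves are elementary.

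For Part (1), fix $\varepsilon>0$ and choose, by the Weierstrass theorem, a complex polynomial $p(t)=\sum_{n=0}^N c_n t^n$ with $\|f-p\|_{\infty,[0,1]}<\varepsilon/4$. Since $0\le b\le 1$, the continuous functional calculus gives $\|f(b)-p(b)\|\le\|f-p\|_{\infty,[0,1]}<\varepsilon/4$, so $\|[f(b)-p(b),a]\|\le 2\|a\|\cdot\varepsilon/4\le\varepsilon/2$. The constant term $c_0$ commutes with $a$, hence $[p(b),a]=\sum_{n=1}^N c_n[b^n,a]$, and the key telescoping identity
\[
[b^n,a]=\sum_{k=0}^{n-1} b^k\,[b,a]\,b^{\,n-1-k}
\]
together with $\|b\|\le 1$ yields $\|[b^n,a]\|\le n\|[b,a]\|$. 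Summing gives $\|[p(b),a]\|\le\big(\sum_{n=1}^N n|c_n|\big)\|[b,a]\|$. Putting $C=\sum_{n=1}^N n|c_n|$, a constant depending only on $p$ (hence only on $f$ and $\varepsilon$), and setting $\delta=\varepsilon/(2C)$, the hypothesis $\|[b,a]\|<\delta$ forces $\|[p(b),a]\|<\varepsilon/2$, and the triangle inequality closes the argument.

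For Part (2), the extra hypothesis $f(0)=0$ is exactly what I would use to obtain an approximating polynomial \emph{without constant term}. Given $\varepsilon>0$, first pick any polynomial $q$ with $\|f-q\|_{\infty,[0,1]}<\varepsilon/4$; since $f(0)=0$ we have $|q(0)|<\varepsilon/4$, so $p(t):=q(t)-q(0)=\sum_{n=1}^N c_n t^n$ satisfies $p(0)=0$ and $\|f-p\|_{\infty,[0,1]}<\varepsilon/2$. As before, $\|f(b)a-p(b)a\|\le\|f(b)-p(b)\|\,\|a\|<\varepsilon/2$, so it suffices to estimate $dist(p(b)a,B)$. Here I would factor each monomial term as $b^n a=b^{\,n-1}(ba)$ for $n\ge 1$: if $y\in B$ satisfies $\|ba-y\|<\delta$, then since $b\in B$ and $B$ is a subalgebra, $b^{\,n-1}y\in B$, and $\|b^n a-b^{\,n-1}y\|=\|b^{\,n-1}(ba-y)\|\le\|b\|^{n-1}\delta\le\delta$. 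Thus $dist(b^n a,B)\le\delta$ for every $n\ge 1$, and by subadditivity of the distance functional $dist(p(b)a,B)\le\big(\sum_{n=1}^N|c_n|\big)\delta=:C'\delta$. Choosing $\delta=\varepsilon/(2C')$ then gives $dist(f(b)a,B)\le\|f(b)a-p(b)a\|+dist(p(b)a,B)<\varepsilon$.

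I expect the ``main obstacle'' to be merely bookkeeping: ensuring that the constants $C$ and $C'$ depend only on the chosen polynomial, so that the resulting $\delta$ is uniform over all $\rm{C}^*$-algebras. The one genuinely load-bearing hypothesis is $f(0)=0$ in Part (2); without it the approximating polynomial would carry a term $c_0\cdot a$, and since $a$ itself need not lie near $B$, the conclusion would fail. Everything else follows from polynomial approximation together with the telescoping expansion of $[b^n,a]$ and the factorization $b^n a=b^{\,n-1}(ba)$.
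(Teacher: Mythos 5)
The paper does not supply its own proof of this lemma: both parts are quoted from the cited reference (Lemmas 1.5 and 1.6 there), so there is no internal argument to compare against, and the question is only whether your proof is sound --- it is. It is the standard Weierstrass-approximation argument, and the two points that genuinely matter are handled correctly: the constants $C=\sum_{n\geq 1} n|c_n|$ and $C'=\sum_{n\geq 1}|c_n|$ depend only on the approximating polynomial (hence only on $f$ and $\varepsilon$), which is what makes $\delta$ uniform over all $\rm{C}^*$-algebras; and in Part (2) the hypothesis $f(0)=0$ is used exactly where it must be, to remove the constant term before factoring $b^na=b^{n-1}(ba)$, with $b^{n-1}y\in B$ because $b\in B$ and $B$ is a subalgebra (and for $n=1$ simply $y\in B$). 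Three cosmetic remarks. First, if $A$ is non-unital, then $p(b)$ with $c_0\neq 0$ lives in the unitization; your observation that the constant term commutes with $a$ makes the estimate of $[p(b),a]$ legitimate there, and since $[f(b),a]=[f(b)-f(0)\cdot 1,a]$ the conclusion lands back in $A$, so nothing is lost, but this is worth a sentence. Second, $\delta=\varepsilon/(2C)$ is undefined in the degenerate case $C=0$ (i.e.\ $p$ constant), where the commutator term vanishes anyway; replacing $\delta$ by $\varepsilon/(2C+1)$, and likewise $\varepsilon/(2C'+1)$ in Part (2), removes the caveat. Third, your appeal to subadditivity, $\mathrm{dist}(x+y,B)\leq \mathrm{dist}(x,B)+\mathrm{dist}(y,B)$ and $\mathrm{dist}(\lambda x,B)=|\lambda|\,\mathrm{dist}(x,B)$, is valid precisely because $B$ is a linear subspace, which holds here since $B$ is a subalgebra; making that explicit would make the proof fully self-contained.
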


We recall the notion of ultrapower of $\rm{C}^*$-algebras.

\begin{notation} Let $A$ be a $\rm{C}^*$-algebra, and let $l^\infty(A)$ denote the $\rm{C}^*$-algebra of all bounded functions from $\mathbb{N}$ into $A$ with entry-wise defined algebraic operations. Let $\omega$ be a free ultrafilter on $\mathbb{N}$. The ultrapower of $A$ is defined to be
	\[
	A_\omega:=l^\infty(A)/\{(a_n)_{n=1}^\infty\in l^\infty(A):\lim_{n\rightarrow \omega}\|a_n\|=0\}.
	\]
	We identity $A$ with the $\rm{C}^*$-subalgebra of $A_\omega$ consisting of equivalence classes of constant sequences. For a subset $B\subset A$, we denote by $A_\omega\cap B'$ the relative commutant of $B$ in $A_\omega$. For an action $\alpha:G\rightarrow \text{Aut}(A)$, we denote by $\alpha_\omega$ the induced action of $G$ on $A_\omega$.
\end{notation}

\section{THE MAIN RESULTS}\label{sec3}

First, we give the proof of Theorem \ref{the1}.

\begin{proof}[Proof of Theorem \ref{the1}]
	We only prove the case of the weak tracial Rokhlin property. The proof for the tracial Rokhlin property is the same.\\
	(2) $\Rightarrow$ (3): Fix $\varepsilon>0$, a finite subset $F\subset A$, and a non-zero positive element $x\in A$. Without loss of generality, we may assume $1_A\in F$. Since the property \textquotedblleft$A$ is simple\textquotedblright \  is an separably inheritable property(see \cite[II.8.5.6]{MR2188261}), there is a unital separable simple $\rm{C}^*$-subalgebra $B\subset A$ which contains $F$. Fix a dense sequence $(b_n)_{n=1}^\infty$ in $B$. For each $n\in \mathbb{N}$, applying statement (2) with $F_n=\{b_1,b_2,\cdots,b_n\}$ in place of $F$, $1/n$ in place of $\varepsilon$, and $x$ as given, there exist orthogonal positive contractions $f_g^{(n)}\in A$ for $g\in G$ such that, with $f^{(n)}=\sum_{g\in G}f_g^{(n)}$, the following hold:
	\begin{enumerate}[(a)]
		\item $\|f_g^{(n)}b-bf_g^{(n)}\|<\frac{1}{n}$ for all $g\in G$ and all $b\in F_n$;
		\item $\|\alpha_g(f_h^{(n)})-f_{gh}^{(n)}\|<\frac{1}{n}$ for all $g,h\in G$;
		\item $1-f^{(n)}\precsim x$;
		\item $\|f^{(n)}\|>1-\frac{1}{n}$.
	\end{enumerate}
	Thus we get sequences of positive contractions $(f_g^{(n)})_{n=1}^\infty$ for $g\in G$ such that
	\[
	\lim_{n\rightarrow \infty}\|f_g^{(n)}b-bf_g^{(n)}\|=0 \text{ for all } g\in G \text{ and all }b\in B, \text{ and}
	\]
	\[
	\lim_{n\rightarrow \infty}\|\alpha_g(f_h^{(n)})-f_{gh}^{(n)}\|=0\text{ for all } g,h\in G.
	\]
	Since $G$ is a finite group, we have
	\[
	\lim_{n\rightarrow \infty}\|f^{(n)}b-bf^{(n)}\|=0 \text{ for all } b\in B.
	\]
	For each $n\in \mathbb{N}$, define a completely positive linear map $\varphi_n:B\rightarrow A$ by
	\[
	\varphi_n(b):=f^{(n)}bf^{(n)} \text{ for all } b\in B.
	\]
	Choosing a free ultrafilter $\omega$ on $\mathbb{N}$, we may consider the ultrapower and the completely positive map $\varPhi:B\rightarrow A_\omega$, which is induced by $(\varphi_n)_{n=1}^\infty$. We claim that $\varPhi$ has order zero. In fact, for any $b,c\in B_+^1$ with $bc=0$, we have
	\begin{align*}
		\|\varPhi(b)\varPhi(c)\| & =\mathop{\lim}\limits_{n\rightarrow \omega}\|f^{(n)}bf^{(n)}f^{(n)}cf^{(n)}\|\\
		& \leq \mathop{\lim}\limits_{n\rightarrow \omega}\|f^{(n)}bc(f^{(n)})^3\|+\mathop{\lim}\limits_{n\rightarrow \omega}\|(f^{(n)})^2c-c(f^{(n)})^2\|=0.
	\end{align*}
	This proves the claim. Since $B$ is simple and $\|\varPhi\|=\|\varPhi(1)\|=\mathop{\lim}\limits_{n\rightarrow \omega}\|(f^{(n)})^2\|=1$, it follows from Lemma \ref{lem2} that $\varPhi$ is an isometry. Since $F$ is a finite set, we can find an increasing subsequence $(n_k)_{k=1}^{\infty}$ such that
	\[
	\|f^{(n_k)}af^{(n_k)}\|>\|a\|-\varepsilon \text{ for all } a\in F \text{ and } k\in \mathbb{N}.
	\]
	Note that we have
	\[
	\lim_{k\rightarrow \infty}\|f_g^{(n_k)}a-af_g^{(n_k)}\|=0 \text{ for all } g\in G \text{ and all }a\in F,
	\]
	\[\lim_{k\rightarrow \infty}\|\alpha_g(f_h^{(n_k)})-f_{gh}^{(n_k)}\|=0 \text{ for all } g,h\in G, \text{ and}
	\]
	\[
	1-f^{(n_k)}\precsim x \text{ for all } k\in \mathbb{N}.
	\]
	Choosing a sufficiently large $k$ and set $f_g:=f_g^{(n_k)}$ for $g\in G$, the conclusion of (3) holds.\\
	(3) $\Rightarrow$ (1) is immediate.\\
	(1) $\Rightarrow$ (2): Fix $\varepsilon>0$, a finite subset $F\subset A$, and a non-zero positive element $x\in A$. Without loss of generality, we can assume that $\|x\|=1$. Let $f_g$ and $f$ are as in Definition \ref{def1}, then Conditions (a)\textendash(c) in statement (2) are satisfied. Note that $(f_g)_{g\in G}$ are pairwise orthogonal, thus $\|f\|=\text{max}\{\|f_g\|:g\in G\}\leq 1$. Using Condition (d) in Definition \ref{def1} to get the last inequality, we have
	\[
	\|f\|\geq\|f\|^2=\|f\|\|x\|\|f\|\geq\|fxf\|>1-\varepsilon.
	\]
\end{proof}

\begin{corollary}
	Let $A$ be a simple unital $\rm{C}^*$-algebra, let $G$ be a finite group, and let $\alpha:G\rightarrow \text{Aut}(A)$ be an action of $G$ on $A$. Then the following statements are equivalent:
	\begin{enumerate}[(1)]
		\item $\alpha$ has the generalized tracial Rokhlin property;
		\item $\alpha$ has the weak tracial Rokhlin property.
	\end{enumerate}
\end{corollary}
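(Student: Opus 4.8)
The plan is to route both implications through statement (2) of Theorem~\ref{the1}, whose condition $\|f\|>1-\varepsilon$ is exactly what connects the normalization in Definition~\ref{def2} to the norm estimate in Definition~\ref{def1}. The implication (1)~$\Rightarrow$~(2) is then immediate: given $F$, $\varepsilon>0$ and $x\in A_+\setminus\{0\}$, the generalized tracial Rokhlin property supplies normalized orthogonal positive elements $f_g$, which are in particular orthogonal positive contractions, and since they are pairwise orthogonal with $\|f_g\|=1$ we get $\|f\|=\max_{g\in G}\|f_g\|=1>1-\varepsilon$. Hence conditions (a)--(d) of statement (2) of Theorem~\ref{the1} hold, and that theorem yields the weak tracial Rokhlin property.

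For (2)~$\Rightarrow$~(1) the real work is to upgrade positive contractions with $\|f\|>1-\varepsilon$ to \emph{normalized} positive elements. After rescaling I may assume $F$ consists of contractions. I would fix, once and for all, the normalizing function $\phi(t)=\min(2t,1)$ on $[0,\infty)$, which satisfies $\phi(0)=0$ and $\phi(t)\ge t$ for $t\in[0,1]$. Using Lemma~\ref{lem3}(1) for $\phi$ with target tolerance $\varepsilon$, together with the uniform continuity of the functional calculus $y\mapsto\phi(y)$ on positive contractions, choose $\delta>0$, and then apply statement (2) of Theorem~\ref{the1} (available by the hypothesis of (2)) with tolerance $\varepsilon'=\min\{\delta,1/4\}$ to obtain orthogonal positive contractions $f_g$, $f=\sum_{g\in G}f_g$, satisfying (a)--(c) and $\|f\|>1-\varepsilon'$. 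The first observation is that equivariance forces the norms $\|f_g\|$ to be close to one another: since $\|\alpha_g(f_h)\|=\|f_h\|$, condition (b) gives $\big|\,\|f_{gh}\|-\|f_h\|\,\big|<\varepsilon'$, and letting $g$ run through $G$ shows $\big|\,\|f_g\|-\|f_h\|\,\big|<\varepsilon'$ for all $g,h$. As every $\|f_g\|$ then lies within $\varepsilon'$ of $\|f\|=\max_{g}\|f_g\|>1-\varepsilon'$, we obtain $\|f_g\|>1-2\varepsilon'>1/2$ for all $g$. Set $\tilde f_g:=\phi(f_g)$.

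It then remains to verify that $(\tilde f_g)_{g\in G}$ witnesses the generalized tracial Rokhlin property. Normalization holds because $\|\tilde f_g\|=\phi(\|f_g\|)=1$, using $\|f_g\|>1/2$; orthogonality is preserved because $\phi(0)=0$ places $\tilde f_g$ in the hereditary subalgebra generated by $f_g$, so $f_gf_h=0$ forces $\tilde f_g\tilde f_h=0$ for $g\ne h$. Condition (1) follows from Lemma~\ref{lem3}(1) applied to (a), and condition (2) from $\alpha_g(\tilde f_h)=\phi(\alpha_g(f_h))$ together with the uniform continuity of the functional calculus applied to (b). The decisive point is condition (3): since $\phi(t)\ge t$ on $[0,1]$ we have $\tilde f_g\ge f_g$, hence $\tilde f:=\sum_{g}\tilde f_g\ge f$ and thus $0\le 1-\tilde f\le 1-f$; by Lemma~\ref{lem1}(3) we have $(1-\tilde f-\eta)_+\precsim(1-f-\eta)_+$ for every $\eta>0$, which with Lemma~\ref{lem1}(1) yields $1-\tilde f\precsim 1-f\precsim x$.

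The main obstacle is keeping all the defining conditions intact under the normalizing functional calculus, and in particular avoiding a circularity: the constant $\delta$ produced by Lemma~\ref{lem3} depends on $\phi$, so $\phi$ must be chosen \emph{before} $\varepsilon'$ is selected. Fixing $\phi(t)=\min(2t,1)$ --- rather than a rescaling tied to $\|f\|$ --- accomplishes this, while the monotonicity $\phi\ge\mathrm{id}$ on $[0,1]$ is precisely what preserves the Cuntz subequivalence in condition (3).
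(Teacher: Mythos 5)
Your proof is correct and follows essentially the same route as the paper: both directions go through the norm condition $\|f\|>1-\varepsilon$ of Theorem~\ref{the1}(2), and in the converse direction both arguments use the approximate equivariance (b) to show that all the $\|f_g\|$ are uniformly close to $1$ before normalizing. The only difference is the normalization device --- the paper simply rescales, setting $f_g=e_g/\|e_g\|$, which avoids Lemma~\ref{lem3} and the functional-calculus continuity estimates needed for your $\phi(t)=\min(2t,1)$, while both versions conclude Cuntz subequivalence from the pointwise inequality $1-\tilde f\le 1-f$ in the same way.
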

\begin{proof}
	(1) $\Rightarrow$ (2) is contained in Theorem \ref{the1}.\\
	(2) $\Rightarrow$ (1): Fix $\varepsilon>0$, a finite subset $F\subset A$, and a non-zero positive element $x\in A$. Without loss of generality, we can assume that $\|x\|=1$. Choose $\delta$ with $0<\delta<\frac{1}{2}$ such that
	\[
	\frac{2\delta}{1-2\delta}<\varepsilon.
	\]
	Since $\alpha$ has the weak tracial Rokhlin property, there exist orthogonal positive contractions $e_g\in A$ for $g\in G$ such that, with $e=\sum_{g\in G}e_g$, the following hold:
	\begin{enumerate}[(a)]
		\item $\|e_ga-ae_g\|<\delta$ for all all $g\in G$ and $a\in F$;
		\item $\|\alpha_g(e_h)-e_{gh}\|<\delta$ for all $g,h\in G$;
		\item $1-e\precsim x$;
		\item $\|exe\|>1-\delta$.
	\end{enumerate}
	Note that $\|e\|=\text{max}\{\|e_g\|:g\in G\}$. Hence, by (d), there exists $g_0\in G$ such that
	\[
	\|e_{g_0}\|=\|e\|\geq \|exe\|>1-\delta.
	\]
	Using (b) we have
	\[
	\|e_g\|>\|\alpha_{gg_0^{-1}}(e_{g_0})\|-\delta=\|e_{g_0}\|-\delta>1-2\delta>0 \text{ for all }g\in G.
	\]
	Set $f_g=e_g/\|e_g\|$ for each $g\in G$. Then $(f_g)_{g\in G}$ is a family of normalized orthogonal positive contractions in $A$. Using (a) and (b) we have
	\[
	\|f_ga-af_g\|=\frac{\|e_ga-ae_g\|}{\|e_g\|}<\frac{\delta}{1-2\delta}<\varepsilon,
	\]
	\[
	\|\alpha_g(f_h)-f_{gh}\|\leq\|\frac{\alpha_g(e_h)-e_{gh}}{\|e_h\|}\|+\|\frac{e_{gh}}{\|e_h\|}-\frac{e_{gh}}{\|e_{gh}\|}\|<\frac{2\delta}{1-2\delta}<\varepsilon
	\]
	for all $a\in F$ and all $g,h\in G$. Moreover, put $f=\sum_{g\in G}f_g$, using (c) we have 
	\[
	1-f\leq 1-e\precsim x.
	\]
\end{proof}

Note that every non-zero projection is norm 1. Thus we have the following corollary. This corollary says that Condition (d) in the definition of tracial Rokhlin property is redundant if we require one of the projections to be non-zero.

\begin{corollary}\label{cor1}
	Let $A$ be a simple unital $\rm{C}^*$-algebra, let $G$ be a finite group, and let $\alpha:G\rightarrow \text{Aut}(A)$ be an action of $G$ on $A$. If for any finite subset $F\subset A$, any $\varepsilon>0$, and any $x\in A_+$, there are non-zero mutually orthogonal projections $e_g\in A$ for $g\in G$, with $e=\sum_{g\in G}e_g$, the following hold:
	\begin{enumerate}[(1)]
		\item $\|e_ga-ae_g\|<\varepsilon$ for all $g\in G$ and all $a\in F$;
		\item $\|\alpha_g(e_h)-e_{gh}\|<\varepsilon$ for all $g,h\in G$;
		\item $1-e\precsim x$.
	\end{enumerate}
	Then $\alpha$ has the tracial Rokhlin property.
\end{corollary}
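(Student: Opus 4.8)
The plan is to prove Corollary \ref{cor1} by showing that the hypothesis gives exactly condition (2) of the tracial Rokhlin version of Theorem \ref{the1}, whence the tracial Rokhlin property follows. The only gap between the stated hypothesis and statement (2) of Theorem \ref{the1} is condition (d): the hypothesis provides non-zero mutually orthogonal projections satisfying (a)--(c) but says nothing about $\|e\|$. The observation recorded in the sentence preceding the corollary resolves this immediately: a non-zero projection has norm $1$, so $\|e\| = \max\{\|e_g\|: g\in G\} = 1 > 1-\varepsilon$ automatically.

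Concretely, I would fix a finite subset $F\subset A$, an $\varepsilon>0$, and a non-zero positive element $x\in A$, and apply the hypothesis with these same data to obtain non-zero mutually orthogonal projections $e_g\in A$ for $g\in G$ satisfying conditions (1)--(3) of the corollary. Setting $f_g := e_g$ and $f := e = \sum_{g\in G} e_g$, conditions (a), (b), (c) of statement (2) of Theorem \ref{the1} (in its tracial Rokhlin form) are precisely conditions (1), (2), (3) here. For condition (d), since each $e_g$ is a non-zero projection, $\|e_g\| = 1$, and because the $e_g$ are pairwise orthogonal projections, $\|f\| = \|e\| = \max\{\|e_g\| : g\in G\} = 1 > 1-\varepsilon$. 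Thus all of (a)--(d) of statement (2) hold, with the $f_g$ chosen to be projections.

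Since statement (2) of Theorem \ref{the1} holds in its tracial Rokhlin form, the equivalence established there yields statement (1), namely that $\alpha$ has the tracial Rokhlin property, completing the proof. There is no genuine obstacle here: the content is entirely carried by Theorem \ref{the1}, and the corollary is a direct specialization exploiting the fact that projections are automatically normalized. The only point requiring a moment's care is the identity $\|e\| = \max_g \|e_g\|$ for a sum of pairwise orthogonal positive elements, which holds because orthogonality makes $e$ block-diagonal across the supporting hereditary subalgebras, so its norm is the largest of the summands' norms.
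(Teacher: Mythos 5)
Your proof is correct and is exactly the argument the paper intends: the corollary is stated immediately after the remark that every non-zero projection has norm $1$, so condition (d) of Theorem \ref{the1}(2) holds automatically and the equivalence in Theorem \ref{the1} (in its tracial Rokhlin form) gives the conclusion. The paper leaves this as an unwritten one-line deduction, and your write-up, including the observation that $\|e\|=\max_g\|e_g\|$ for pairwise orthogonal positive elements, matches it.
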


Recall that a simple unital $\rm{C}^*$-algebra $A$ is called purely infinite if $A$ is infinite dimensional and for any non-zero element $a\in A$, there are $x,y\in A$ such that $xay=1$. It is well known that a simple infinite dimensional unital $\rm{C}^*$-algebra $A$ is purely infinite if and only if $a\precsim b$ for any non-zero $a,b\in A_+$(see \cite{MR1124300}). Condition (d) in Definition \ref{def1} is needed to ensure that the trivial action on a purely infinite simple unital $\rm{C}^*$-algebra does not have the weak tracial Rokhlin porperty. As a consequence of Theorem \ref{the1}, Condition (d) in Definition \ref{def1} is redundant when $A$ is not purely infinite.

The following lemma is a simple version of \cite[Lemma 2.9]{1408.5546}.

\begin{lemma}
	Let $A$ be a simple infinite dimensional unital $\rm{C}^*$-algebra which is not purely infinite. Let $x\in A_+\setminus\{0\}$. Then for any $\varepsilon>0$ there is $y\in \overline{(xAx)}_+\setminus \{0\}$ such that whenever $f\in A_+$ satisfies $0\leq f\leq 1$ and $1-f\precsim y$, then $\|f\|>1-\varepsilon$.
\end{lemma}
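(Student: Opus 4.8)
The plan is to contrapose the pure-infiniteness characterization: since $A$ is not purely infinite, there must exist some pair of non-zero positive elements that are \emph{not} Cuntz comparable in the ``bad'' direction, and I want to extract from this a witness $y$ living in the hereditary subalgebra $\overline{xAx}$. The key conceptual point is that Cuntz subequivalence controls norm: if $1-f$ is very small in the Cuntz sense (dominated by a suitably chosen $y$), then $1-f$ cannot have a large spectral part near $1$, so $\|1-f\|$ is close to $0$ and hence $\|f\|$ is close to $1$. So the real content is to produce a single element $y$ whose Cuntz class is small enough to force this.

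First I would use the fact that $A$ is not purely infinite to locate a non-zero $a\in A_+$ together with a non-zero $b\in A_+$ such that $b\not\precsim a$; equivalently, there is a non-zero positive element that fails to be absorbed, and I expect to phrase this as: the element $1_A$ (or a suitable spectral projection of it) is not Cuntz-below every non-zero positive element. Concretely, I would argue that there exists $\eta>0$ and a non-zero $y_0\in A_+$ such that $(1-\eta')_+ \not\precsim y_0$ is impossible to repair, i.e. I want $y_0$ small enough in Cuntz size that no element dominating $1-f$ can push $\|1-f\|$ above $\varepsilon$. Then, to land inside $\overline{xAx}$, I would invoke Lemma \ref{lem4} with the non-zero hereditary subalgebra $B=\overline{xAx}$: given the finitely many ``small'' witnesses I need (here just $y_0$, or a comparison element built from it), Lemma \ref{lem4} furnishes a non-zero $y\in \overline{xAx}_+$ with $y\precsim y_0$. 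This $y$ inherits the ``too small to absorb a sizeable chunk of the unit'' property from $y_0$.

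With $y$ in hand, the verification runs as follows. Suppose $f\in A_+$ with $0\le f\le 1$ and $1-f\precsim y$. I would assume for contradiction that $\|f\|\le 1-\varepsilon$, so that the spectrum of $f$ is contained in $[0,1-\varepsilon]$ and hence $1-f\ge \varepsilon\cdot 1_A$ as positive elements; applying functional calculus, $(1-f-\tfrac{\varepsilon}{2})_+$ is a non-zero positive element whose spectrum lies in $[\tfrac{\varepsilon}{2},1]$, and it dominates (up to Cuntz equivalence) a scalar multiple of $1_A$, in particular a full element. By Lemma \ref{lem1}(3) applied to $\varepsilon\cdot 1_A \le 1-f$, together with the transitivity of $\precsim$, this full element would satisfy (full element) $\precsim y \precsim y_0$, contradicting the way $y_0$ was chosen to be not above such a full element. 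Therefore $\|f\|>1-\varepsilon$.

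The main obstacle I anticipate is making precise the step ``not purely infinite yields a $y_0$ that cannot absorb a full piece of the unit,'' since the characterization I quoted is stated as an equivalence and I must unwind its negation carefully: the failure of ``$a\precsim b$ for all non-zero $a,b\in A_+$'' gives me \emph{one} failing pair, and I need that failure to be robust enough (uniform in a spectral cutoff $(1-f-\delta)_+$) to drive the contradiction. I expect to handle this by choosing $y_0$ to be exactly a non-zero positive element with $1_A\not\precsim y_0$ (which exists precisely because $A$ is not purely infinite), and then noting that any non-zero positive element dominating $\varepsilon\cdot 1_A$ is Cuntz-equivalent to $1_A$ and so also fails to be $\precsim y_0$; passing through Lemma \ref{lem4} to relocate the witness into $\overline{xAx}$ is then routine. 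The comparison bookkeeping with $(a-\varepsilon)_+$ and Lemma \ref{lem1} is the only place where care with the cutoff constants is required.
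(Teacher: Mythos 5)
Your proposal is correct and follows essentially the same route as the paper: take a non-zero positive element below which $1_A$ fails to sit (equivalently, one not Cuntz equivalent to $1_A$), relocate it into $\overline{xAx}$ via Lemma \ref{lem4}, and derive $1_A\precsim 1-f\precsim y$ from $\|f\|\le 1-\varepsilon$ to reach a contradiction. The only cosmetic difference is that you extract $\tfrac{\varepsilon}{2}1_A\precsim 1-f$ from the order inequality $\varepsilon 1_A\le 1-f$ via Lemma \ref{lem1}(3), whereas the paper uses the norm estimate $\|1_A-(1_A-f)\|<1-\varepsilon/2$ and Lemma \ref{lem1}(1); both are valid.
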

\begin{proof}
	Since $A$ is not purely infinite, there exists $a\in A_+\setminus\{0\}$ such that $a$ is not Cuntz equivalent to $1_A$. Applying Lemma \ref{lem4} with $\overline{xAx}$ in place of $B$, $n=1$, and $a$ in place of $a_1$, we get $y\in \overline{(xAx)}_+\setminus \{0\}$ such that $y\precsim a$. Let $f\in A_+$ satisfy $0\leq f\leq 1_A$ and $1_A-f\precsim y$, we want to show that $\|f\|>1-\varepsilon$. Suppose that $\|f\|\leq 1-\varepsilon$. Since $\|1_A-(1_A-f)\|<1-\varepsilon/2$, by Lemma \ref{lem1}(1) and the choice of $y$, we have
	\[
	1_A\sim \frac{\varepsilon}{2}\cdot 1_A=(1_A-(1-\frac{\varepsilon}{2}))_+\precsim 1_A-f\precsim y\precsim a.
	\]
	Thus $a\sim 1_A$, a contradiction.
\end{proof}

\begin{corollary}
	Let $A$ be a simple infinite dimensional unital $\rm{C}^*$-algebra which is not purely infinite, let $G$ be a finite group, and let $\alpha:G\rightarrow \text{Aut}(A)$ be an action of $G$ on $A$. If for any finite subset $F\subset A$, any $\varepsilon>0$, and any $x\in A_+\setminus\{0\}$, there exist orthogonal positive contractions $f_g\in A$ for $g\in G$ such that, with $f=\sum_{g\in G}f_g$, the following hold:
	\begin{enumerate}[(1)]
		\item $\|f_ga-af_g\|<\varepsilon$ for all $g\in G$ and all $a\in F$;
		\item $\|\alpha_g(f_h)-f_{gh}\|<\varepsilon$ for all $g,h\in G$;
		\item $1-f\precsim x$.
	\end{enumerate}
	Then $\alpha$ has the weak tracial Rokhlin property.
\end{corollary}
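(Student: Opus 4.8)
The plan is to use the preceding lemma to supply the missing norm estimate, namely Condition (d) in the equivalent reformulation Theorem \ref{the1}(2), and then to invoke the implication (2) $\Rightarrow$ (1) of Theorem \ref{the1}. So fix a finite subset $F\subset A$, an $\varepsilon>0$, and a non-zero $x\in A_+$; the goal is to verify the four conditions of Theorem \ref{the1}(2).

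First I would apply the preceding lemma to this $x$ and $\varepsilon$ to obtain a non-zero $y\in\overline{xAx}_+$ such that every $f\in A_+$ with $0\le f\le 1$ and $1-f\precsim y$ automatically satisfies $\|f\|>1-\varepsilon$. The reason for taking $y$ inside the hereditary subalgebra $\overline{xAx}$ is that then $y\precsim x$, which will let me transport a Cuntz comparison against $y$ into one against $x$ by transitivity of $\precsim$.

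Next I would apply the hypothesis of the corollary to $F$, $\varepsilon$, and $y$ in place of $x$. This yields orthogonal positive contractions $f_g\in A$ for $g\in G$ whose sum $f=\sum_{g\in G}f_g$ satisfies the commutation estimates $\|f_ga-af_g\|<\varepsilon$, the equivariance estimates $\|\alpha_g(f_h)-f_{gh}\|<\varepsilon$, and $1-f\precsim y$. Since the $f_g$ are pairwise orthogonal positive contractions, $f$ is again a positive contraction, so $0\le f\le 1$; the defining property of $y$ then forces $\|f\|>1-\varepsilon$, which is exactly Condition (d) of Theorem \ref{the1}(2). Likewise $1-f\precsim y\precsim x$ gives Condition (c), while Conditions (a) and (b) hold by construction. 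Having checked all four conditions, I would conclude from Theorem \ref{the1} that $\alpha$ has the weak tracial Rokhlin property.

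The substantive input here is entirely the preceding lemma, which is the only place the hypothesis that $A$ is not purely infinite enters; the rest is bookkeeping. The one small point that needs care is the standard fact that a positive element $y$ lying in the hereditary subalgebra $\overline{xAx}$ satisfies $y\precsim x$, which is what legitimizes the passage from $1-f\precsim y$ to $1-f\precsim x$.
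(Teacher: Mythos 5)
Your proof is correct and follows essentially the same route as the paper's: invoke the preceding lemma to produce $y\in\overline{xAx}_+\setminus\{0\}$, run the hypothesis with $y$ in place of $x$, and feed the resulting $\|f\|>1-\varepsilon$ into Theorem \ref{the1}(2). You are in fact slightly more careful than the paper in spelling out the step $1-f\precsim y\precsim x$ via the standard fact that a positive element of $\overline{xAx}$ is Cuntz subequivalent to $x$.
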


\begin{proof}
	Let $F\subset A$ be a finite subset, let $\varepsilon>0$, and let $x\in A_+\setminus\{0\}$. Applying the above lemma, we get $y\in \overline{(xAx)}_+\setminus \{0\}$ such that whenever $f\in A_+$ satisfies $0\leq f\leq 1$ and $1-f\precsim y$, then $\|f\|>1-\varepsilon$. Applying the hypothesis with $F,\varepsilon$ as given, and with $y$ in place of $x$, we get orthogonal positive contractions $f_g\in A$ for $g\in G$. By Theorem \ref{the1}, we only need to prove that $\|f\|>1-\varepsilon$. This follows from the choice of $y$.
\end{proof}

The following lemma is well known.
\begin{lemma}\label{lem6}
	Let $A$ be a $C^*$-algebra, let $\omega$ be a free ultrafilter on $\mathbb{N}$, and let $\pi:l^\infty(A)\rightarrow A_\omega$ be the quotient map. Let $(q_n)_{n\in \mathbb{N}}$ be a family of orthogonal projections in $A_\omega$. Then there exists a family $(p_n)_{n\in \mathbb{N}}$ of orthogonal projections in $l^\infty(A)$ such that $\pi(p_n)=q_n$ for all $n\in \mathbb{N}$.
\end{lemma}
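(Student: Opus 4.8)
The plan is to lift a given family of orthogonal projections $(q_n)_{n\in\mathbb{N}}$ from $A_\omega$ back to $l^\infty(A)$ one index at a time, but with a crucial twist: because orthogonality of projections involves infinitely many pairwise conditions, I cannot simply lift each $q_n$ independently and hope the lifts are orthogonal. Instead I would set up a diagonal-type argument so that at the $n$-th stage I only need to control the lifts $p_1,\dots,p_n$ against each other in finitely many entries. First I would fix, for each $n$, an arbitrary bounded lift $(a_{n,m})_{m\in\mathbb{N}}\in l^\infty(A)$ with $\pi\bigl((a_{n,m})_m\bigr)=q_n$; replacing $a_{n,m}$ by $a_{n,m}^{*}a_{n,m}$-type positive adjustments I would first arrange that the representatives are self-adjoint. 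The key point is that $q_n^2=q_n$ and $q_nq_k=0$ for $n\neq k$ translate into $\lim_{m\to\omega}\|a_{n,m}^2-a_{n,m}\|=0$ and $\lim_{m\to\omega}\|a_{n,m}a_{k,m}\|=0$ respectively.

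The main step is a standard functional-calculus/perturbation procedure applied entrywise. For a single self-adjoint $b$ with $\|b^2-b\|$ small, the spectrum of $b$ is concentrated near $\{0,1\}$, so applying a continuous function $\chi$ equal to $0$ near $0$ and $1$ near $1$ produces a genuine projection $\chi(b)$ close to $b$. For orthogonality I would use the classical fact that finitely many almost-orthogonal almost-projections can be perturbed to exactly orthogonal projections, with perturbation size controlled by the defect. Concretely, I would choose an increasing sequence of index sets and, using that the relevant defects tend to $0$ along $\omega$, pick for each $m$ a threshold $N(m)$ so that for $n\le N(m)$ the entries $a_{n,m}$ are jointly close to being orthogonal projections; then I perturb those finitely many entries to exactly orthogonal projections $p_{n,m}$ in $A$ and set the remaining entries ($n>N(m)$) to $0$. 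Defining $p_n:=(p_{n,m})_{m\in\mathbb{N}}$ gives, for each fixed $n$, a sequence that agrees with the perturbed $a_{n,m}$ for all large $m$, hence $\pi(p_n)=q_n$.

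Finally I would verify the two required conclusions. Orthogonality in $l^\infty(A)$ is exactly pointwise orthogonality of the entries: for fixed $n\neq k$ and every $m$ with $N(m)\ge\max\{n,k\}$ the elements $p_{n,m},p_{k,m}$ are exactly orthogonal projections by construction, and for the finitely many small $m$ we can absorb them into the lifting since altering finitely many entries does not change the class in $A_\omega$; thus $p_np_k=0$ and $p_n^2=p_n$ hold on the nose in $l^\infty(A)$. Combined with $\pi(p_n)=q_n$ from the previous paragraph, this proves the lemma.

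I expect the main obstacle to be the bookkeeping that makes the perturbation uniform enough to survive passage to the entrywise product: one must ensure that the thresholds $N(m)$ increase to infinity along $\omega$ so that \emph{every} fixed index $n$ eventually lies below $N(m)$, while simultaneously keeping the finite-stage orthogonalization error controlled. The cleanest way to handle this is to phrase the whole perturbation as a single statement about finite families (``$k$ self-adjoints with pairwise defects below $\eta$ admit exactly-orthogonal projection approximants within $\varepsilon(\eta)\to 0$'') and apply it entrywise with $k=N(m)$, reducing the ultrafilter argument to the elementary observation that each pairwise defect $\|a_{n,m}^2-a_{n,m}\|$ and $\|a_{n,m}a_{k,m}\|$ is small for $m$ large along $\omega$.
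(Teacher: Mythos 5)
Your argument is correct in substance, but it takes a genuinely different route from the paper. The paper first invokes a projectivity result (Lemma 10.1.12 of the cited book of Loring) to lift the orthogonal projections $(q_n)$ to a family of \emph{exactly orthogonal positive contractions} $(f_n)$ in $l^\infty(A)$; that black box does all the work of achieving exact orthogonality upstairs. The projection step is then entrywise functional calculus on each $f_n^{(k)}$ (performed on a set $X_n\in\omega$ where $\|f_n^{(k)}-(f_n^{(k)})^2\|<1/4$, with the remaining entries set to $0$), and orthogonality of the resulting projections is automatic because each $p_n^{(k)}$ lies in the $C^*$-subalgebra generated by $f_n^{(k)}$ while the $f_n^{(k)}$ are already exactly orthogonal for each fixed $k$. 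You instead start from arbitrary self-adjoint lifts and manufacture the orthogonality by hand, via a quantitative finite-stage lemma (finitely many almost-orthogonal almost-projections can be replaced by nearby exactly orthogonal projections) combined with a threshold/diagonal argument over $\omega$. Both work: the paper's version is shorter because orthogonality is inherited rather than produced, while yours is more self-contained (no appeal to the projectivity of infinite direct sums of cones) at the cost of the bookkeeping you describe. Two small corrections to your write-up: the set $\{m: N(m)<\max\{n,k\}\}$ need not be finite, only its complement lies in $\omega$ --- but since your construction already sets $p_{n,m}=0$ for $n>N(m)$, orthogonality holds for \emph{every} $m$ and nothing needs to be ``absorbed''; and the tolerances $\eta_N$ in your finite-stage lemma must be chosen decreasing in $N$ (so that the $N$-element orthogonalization error is, say, below $1/N$) before the thresholds $N(m)$ are defined, exactly as you indicate in your final paragraph.
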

\begin{proof}
	By \cite[Lemma 10.1.12]{MR1420863}, there is a family $(f_n)_{n\in \mathbb{N}}$ of orthogonal positive contractions in $l^\infty(A)$ such that $\pi(f_n)=q_n$ for all $n\in \mathbb{N}$. For each $n\in \mathbb{N}$, write $f_n=(f_n^{(1)},f_n^{(2)},\cdots)\in l^\infty(A)$. Since $q_n$ is a projection, we can choose $X_n$ in $\omega$ such that $\|f_n^{(k)}-(f_n^{(k)})^2\|<1/4$ for all $k\in X_n$. By Lemma \cite[Lemma 2.5.5]{MR1884366}, for each $k\in X_n$ there exist a projection $p_n^{(k)}$ in the $C^*$-subalgebra generated by $f_n^{(k)}$ such that
	\[
	\|f_n^{(k)}-p_n^{(k)}\|<2\|f_n^{(k)}-(f_n^{(k)})^2\|.
	\]
	Set $p_n^{(k)}=0$ if $k$ does not belong to $X_n$, and put $p_n=(p_n^{(1)},p_n^{(2)},\cdots)$. Then $p_n$ is a projection in $l^\infty(A)$, $\lim_{n\rightarrow \omega}\|p_n^{(k)}-f_n^{(k)}\|=0$, and hence $\pi(p_n)=q_n$. It remains to show that $(p_n)_{n\in \mathbb{N}}$ are pairwise orthogonal, or equivalently, $(p_n^{(k)})_{n\in \mathbb{N}}$ are pairwise orthogonal for each fixed $k$. This follows from the fact that $(f_n^{(k)})_{n\in \mathbb{N}}$ are pairwise orthogonal, and $p_n^{(k)}$ is in the $C^*$-subalgebra generated by $f_n^{(k)}$.
\end{proof}

Using the method and technique in \cite[Proposition 3.10]{MR4216445}, we give an equivalent reformulation of the (weak) tracial Rokhlin property for finite group actions on simple unital $\rm{C}^*$-algebras using the ultrapower.

\begin{proposition}\label{pro1}
	Let $A$ be a simple unital $\rm{C}^*$-algebra, let $G$ be a finite group, let $\alpha:G\rightarrow \text{Aut}(A)$ be an action of $G$ on $A$, and let $\omega$ be a free ultrafilter on $\mathbb{N}$. Then the following statements are equivalent:
	\begin{enumerate}[(1)]
		\item $\alpha$ has the weak tracial Rokhlin property (respectively, tracial Rokhlin property);
		\item For any separable subset $S$ of $A_\omega$, and any $x\in (A_\omega)_+\setminus\{0\}$, there exist orthogonal positive contractions (respectively, orthogonal projections) $f_g\in A_\omega\cap S'$ for $g\in G$ such that, with $f=\sum_{g\in G}f_g$, the following hold:
		\begin{enumerate}[(a)]
			\item $(\alpha_\omega)_g(f_h)=f_{gh}$ for all $g,h\in G$;
			\item $1-f\precsim_{A_\omega} x$;
			\item $\|fsf\|=\|s\|$ for all $s\in S$.
		\end{enumerate}
		\item For any finite subset $F\subset A$, any $\varepsilon>0$, and any $x\in A_+\setminus\{0\}$, there exist orthogonal positive contractions (respectively, orthogonal projections) $f_g\in A_\omega$ for $g\in G$ such that, with $f=\sum_{g\in G}f_g$, the following hold:
		\begin{enumerate}[(a)]
			\item $\|f_ga-af_g\|<\varepsilon$ for all $g\in G$ and all $a\in F$;
			\item $\|(\alpha_\omega)_g(f_h)-f_{gh}\|<\varepsilon$ for all $g,h\in G$;
			\item $(1-f-\varepsilon)_+\precsim_{A_\omega} x$;
			\item $\|f\|>1-\varepsilon$.
		\end{enumerate}
	\end{enumerate}
\end{proposition}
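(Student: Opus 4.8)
The plan is to establish the cycle of implications $(1)\Rightarrow(2)\Rightarrow(3)\Rightarrow(1)$, using Theorem \ref{the1} — in particular its norm-preserving reformulation (statement (3) of that theorem) — as the bridge between the approximate conditions living in $A$ and the exact conditions living in $A_\omega$. I will describe the argument for the weak tracial Rokhlin property; the tracial (projection) case is identical, except that one lifts projections through Lemma \ref{lem6} rather than positive contractions. Throughout I represent an element $z\in A_\omega$ by a bounded sequence $(z^{(n)})_n$ in $A$, and I use repeatedly that a norm identity in $A_\omega$ is exactly the assertion that the corresponding sequence of norms tends to $0$ along $\omega$.

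For $(1)\Rightarrow(2)$ I would fix a countable dense subset $(s_k)_k$ of the separable set $S$, a representative $(x^{(n)})_n$ of $x$, and a single number $\delta$ with $0<\delta<\|x\|$, so that $(x^{(n)}-\delta)_+\neq 0$ for $\omega$-many $n$. For each such $n$ I apply statement (3) of Theorem \ref{the1} with $F_n=\{s_1^{(n)},\dots,s_n^{(n)}\}$, tolerance $1/n$, and the positive element $(x^{(n)}-\delta)_+$, obtaining orthogonal positive contractions $f_g^{(n)}\in A$ (set them to $0$ on the negligible remaining indices). Passing to $A_\omega$, the approximate commutation and equivariance become the exact relations $f_g\in A_\omega\cap S'$ and $(\alpha_\omega)_g(f_h)=f_{gh}$, while the norm-preservation $\|f^{(n)}s_k^{(n)}f^{(n)}\|>\|s_k^{(n)}\|-1/n$ yields $\|fs_kf\|=\|s_k\|$, hence $\|fsf\|=\|s\|$ for all $s\in S$ by density and continuity. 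The delicate point is $1-f\precsim_{A_\omega}x$: from $1-f^{(n)}\precsim_A (x^{(n)}-\delta)_+$ with a \emph{fixed} $\delta$, Lemma \ref{lem1}(2) furnishes witnesses $w^{(n)}$ with $\|w^{(n)}\|\le\delta^{-1/2}$ and $\|1-f^{(n)}-w^{(n)}x^{(n)}w^{(n)*}\|<1/n$, so that $w=(w^{(n)})_n$ defines an element of $A_\omega$ with $1-f=wxw^*$. Keeping $\delta$ independent of $n$ (which is legitimate because $\|x\|>0$) is exactly what bounds the $w^{(n)}$; this is the step I expect to be the main obstacle.

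The implication $(2)\Rightarrow(3)$ is then almost immediate. Given $F,\varepsilon,x$, I apply (2) to the separable set $S=F\cup\{1_A\}$ and the same $x$. Exact commutation with $S\supseteq F$ and exact equivariance give conditions (a) and (b) with $<\varepsilon$ for free; since $(1-f-\varepsilon)_+\precsim_{A_\omega}1-f\precsim_{A_\omega}x$, condition (c) holds; and condition (c) of (2) applied to $s=1_A$ reads $\|f\|^2=\|f^2\|=\|1_A\|=1$, so $\|f\|=1>1-\varepsilon$, which is (d).

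For $(3)\Rightarrow(1)$ I would fix once and for all a continuous $h\colon[0,1]\to[0,1]$ with $h(0)=0$ that equals $1$ on $[\tfrac12,1]$ (for instance $h(t)=\min(1,2t)$), apply statement (3) with a small tolerance $\varepsilon'$ and the given $x$, and lift the orthogonal positive contractions $f_g\in A_\omega$ to genuinely orthogonal positive contractions $(f_g^{(n)})_n$ in $l^\infty(A)$ by \cite[Lemma 10.1.12]{MR1420863} (by Lemma \ref{lem6} in the projection case). Setting $\hat f_g^{(n)}=h(f_g^{(n)})$ gives orthogonal positive contractions with $\sum_{g}\hat f_g^{(n)}=h(f^{(n)})$, because $h(0)=0$. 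The booster $h$ serves to close the gap between the available relation $(1-f-\varepsilon')_+\precsim_{A_\omega}x$ and the required $1-\hat f\precsim_A x$: lifting the former via Lemma \ref{lem1}(1) gives $(1-f^{(n)}-\varepsilon'-\rho)_+\precsim_A x$ for $\omega$-many $n$ and arbitrarily small $\rho$, and the support of $1-h(f^{(n)})$, namely the spectral region $\{\,t<\tfrac12\,\}$, is contained in the support $\{\,t<1-\varepsilon'-\rho\,\}$ of $(1-f^{(n)}-\varepsilon'-\rho)_+$ as soon as $\varepsilon'+\rho\le\tfrac12$, whence $1-\hat f^{(n)}\precsim_A (1-f^{(n)}-\varepsilon'-\rho)_+\precsim_A x$. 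Moreover $\|f^{(n)}\|>1-\varepsilon'>\tfrac12$ forces $\|\hat f^{(n)}\|=h(\|f^{(n)}\|)=1$, and choosing $\varepsilon'$ small enough — using Lemma \ref{lem3}(1) for the commutators and the uniform continuity of the functional calculus for the equivariance — keeps conditions (a) and (b) within the target $\varepsilon$. Selecting a single index $n$ in the ($\omega$-large) set where all of these hold, the contractions $\hat f_g^{(n)}\in A$ satisfy statement (2) of Theorem \ref{the1}, so $\alpha$ has the weak tracial Rokhlin property.
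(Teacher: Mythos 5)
Your proposal is correct and follows essentially the same route as the paper: $(1)\Rightarrow(2)$ by applying the norm-preserving reformulation (Theorem \ref{the1}(3)) along a diagonal sequence and using the cut-down $(x^{(n)}-\delta)_+$ so that Lemma \ref{lem1}(2) yields uniformly bounded Cuntz witnesses; $(2)\Rightarrow(3)$ immediately; and $(3)\Rightarrow(1)$ by lifting to $l^\infty(A)$ and applying a boosting function vanishing at $0$ and equal to $1$ on the upper part of the spectrum. The only (immaterial) difference is that you use the fixed booster $h(t)=\min(1,2t)$ together with a support-containment argument, where the paper uses the $\varepsilon$-dependent $\varphi(t)=\min(1,t/(1-2\varepsilon))$ and the explicit identity $1-\varphi(e)=\tfrac{1}{1-2\varepsilon}(1-e-2\varepsilon)_+$.
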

\begin{proof}
	We only prove the case of the weak tracial Rokhlin property.\\
	(1) $\Rightarrow$ (2): Let $S$ be a separable subset of $A_\omega$, and let $x\in (A_\omega)_+\setminus\{0\}$. We may assume that $\|x\|=1$. It is well known that positive elements lift to positive elements. Lift $x$ to a sequence $(x_n)_{n=1}^\infty$ in $l^\infty(A)_+$. Since $\lim\limits_{n\rightarrow \omega}\|x_n\|=1$, by replacing $x_n$ with $x_n/\|x_n\|$ we may further assume that $\|x_n\|=1$ for all $n\in \mathbb{N}$. Choose a dense sequence $(s_i)_{i=1}^\infty$ in $S$, and lift each $s_i$ to a bounded sequence $(s_i^{(n)})_{n=1}^\infty$ in $l^\infty(A)$. For each $n\in \mathbb{N}$, applying Theorem \ref{the1}(3) with $F_n=\{s_1^{(n)},s_2^{(n)},\cdots,s_n^{(n)}\}$ in place of $F$, with $1/n$ in place of $\varepsilon$, and with $(x_n-1/2)_+$ in place of $x$, we obtain orthogonal positive contractions $f_g^{(n)}\in A$ for $g\in G$ such that, with $f^{(n)}=\sum_{g\in G}f_g^{(n)}$, the following hold:
	\begin{enumerate}[(a$'$)]
		\item $\lim_{n\rightarrow \infty}\|f_g^{(n)}s_i^{(n)}-s_i^{(n)}f_g^{(n)}\|=0$ for all $g\in G$ and for all $i\in \mathbb{N}$;
		\item $\lim_{n\rightarrow \infty}\|\alpha_g(f_h^{(n)})-f_{gh}^{(n)}\|=0$ for all $g,h\in G$;
		\item $1-f^{(n)}\precsim (x_n-1/2)_+$ for all $n\in \mathbb{N}$;
		\item $\lim_{n\rightarrow \infty}(\|s_i^{(n)}\|-\|f^{(n)}s_i^{(n)}f^{(n)}\|)=0$ for all $i\in \mathbb{N}$.
	\end{enumerate}
	Set $f_g=\pi(f_g^{(1)},f_g^{(2)},\cdots)\in A_\omega$ for $g\in G$, where $\pi:l^\infty(A)\rightarrow A_\omega$ is the quotient map. Then $(f_g)_{g\in G}$ is a family of orthogonal positive contractions in $A_\omega$. Since $(s_i)_{i=1}^\infty$ is dense in $S$, we have $f_g\in A_\omega\cap S'$ for $g\in G$, and Condition (a) and (c) hold. By (c$'$) and Lemma \ref{lem1}(2), for each $n\in\mathbb{N}$ there is $v_n\in A$ such that $\|1-f^{(n)}-v_nx_nv_n^*\|<1/n$ and $\|v_n\|\leq\sqrt{2}\|1-f^{(n)}\|^{1/2}\leq \sqrt{2}$. Thus $(v_n)_{n=1}^\infty$ is a bounded sequence. Set $v=\pi(v_1,v_2,\cdots)\in A_\omega$, then $1-f=vxv^*\precsim_{A_\omega} x$. Condition (b) holds.\\
	(2) $\Rightarrow$ (3) is immediate.\\
	(3) $\Rightarrow$ (1): Let $F\subset A$ be a finite subset, let $0<\varepsilon<1/2$, and let $x\in A_+\setminus\{0\}$. We may assume that $F\subset A^1$. Applying the hypothesis with $F,\varepsilon,x$ as given, we obtain a family of orthogonal positive contractions $(e_g)_{g\in G}$ in $A_\omega$ satisfying Condition (a)\textendash(d) of (3). By \cite[Lemma 10.1.12]{MR1420863}, we can lift $(e_g)_{g\in G}$ to a family of orthogonal positive contractions $((e_g^{(n)})_{n=1}^\infty)_{g\in G}$ in $l^\infty(A)$. Since $(1-e-\varepsilon)_+\precsim_{A_\omega} x$, there is $v\in A_\omega$ such that $\|(1-e-\varepsilon)_+-vxv^*\|<\varepsilon$. Lift $v$ to a bounded sequence $(v_n)_{n=1}^\infty$ in $l^\infty(A)$ and set $e^{(n)}=\sum_{g\in G}e_g^{(n)}$. Then we can choose a sufficiently large $k$ such that
	\begin{enumerate}[(a$''$)]
		\item $\|e_g^{(k)}a-ae_g^{(k)}\|<\varepsilon$ for all $g\in G$ and all $a\in F$;
		\item $\|\alpha_g(e_h^{(k)})-e_{gh}^{(k)}\|<\varepsilon$ for all $g,h\in G$;
		\item $\|(1-e^{(k)}-\varepsilon)_+-v_kxv_k^*\|<\varepsilon$;
		\item $\|e^{(k)}\|>1-\varepsilon$.
	\end{enumerate}
	Define a continuous function $\varphi:[0,1]\rightarrow [0,1]$ by
	\begin{equation*}
		\varphi(t) = \begin{cases}
			\frac{1}{1-2\varepsilon}t \quad & 0\leq t\leq 1-2\varepsilon\\
			1 \quad & 1-2\varepsilon\leq t \leq 1.
		\end{cases}
	\end{equation*}
	Set $f_g=\varphi(e_g^{(k)})$ for each $g\in G$, and $f=\sum_{g\in G}f_g$. Then $(f_g)_{g\in G}$ is a family of orthogonal positive contractions in $A$. Since $\|f_g-e_g^{(k)}\|<2\varepsilon$ for all $g\in G$, it is easy to see that Condition (a) and (b) in Theorem \ref{the1}(ii) hold with $\varepsilon$ replaced by $5\varepsilon$. Since $\varphi(0)=0$ and $(e_g^{(k)})_{g\in G}$ are pairwise orthogonal, we have
	\[
	f=\sum_{g\in G}f_g=\sum_{g\in G}\varphi(e_g^{(k)})=\varphi(\sum_{g\in G}e_g^{(k)})=\varphi(e^{(k)}).
	\]
	Thus $\|f\|=1$ by (d)$''$. Condition (d) in Theorem \ref{the1}(ii) holds. Using (c)$''$ and Lemma \ref{lem1}(1) at the fourth step, we get
	\[
	1-f=1-\varphi(e^{(k)})=\frac{1}{1-2\varepsilon}(1-e^{(k)}-2\varepsilon)_+\sim (1-e^{(k)}-2\varepsilon)_+\precsim v_kxv_k^*\precsim x,
	\]
	which is Condition (c) in Theorem \ref{the1}(ii). By Theorem \ref{the1}, $\alpha$ has the weak tracial Rokhlin property.\\
	The proof for the tracial Rokhlin property is the same but easier. In the proof of (3) implies (1), use Lemma \ref{lem6} to lift orthogonal projections $(e_g)_{g\in G}$ in $A_\omega$ to orthogonal projections $((e_g^{(n)})_{n=1}^\infty)_{g\in G}$ in $l^\infty(A)$. After choosing a sufficiently large $k$, we do not need to use functional calculus for $e_g^{(k)}$. Since $e^{(k)}$ is a projection, by (c$''$) we have
	\[
	1-e^{(k)}\sim (1-e^{(k)}-2\varepsilon)_+\precsim v_kxv_k^*\precsim x.
	\]
\end{proof}

Next we give the proof of Theorem \ref{the2}.

\begin{proof}[Proof of Theorem \ref{the2}]
	By Proposition \ref{pro1}, it is sufficient to show that for any finite subset $F\subset A$, any $\varepsilon>0$, and any $x\in A_+\setminus\{0\}$, there exist a family of non-zero orthogonal projections $(p_g)_{g\in G}$ in $A_\omega$ such that, with $p=\sum_{g\in G}p_g$, the following hold:
	\begin{enumerate}[(1)]
		\item $\|p_ga-ap_g\|<\varepsilon$ for all $g\in G$ and all $a\in F$;
		\item $(\alpha_\omega)_g(p_h)=p_{gh}$ for all $g,h\in G$;
		\item $1-p\precsim_{A_\omega} x$.
	\end{enumerate}
	Without loss of generality, we may assume that $F\subset A^1$. By replacing $F$ by $\bigcup_{g\in G}\alpha_g(F)$, we may further assume that $\alpha_g(F)=F$. Set $n=\text{card}(G)$. We claim that there is a non-zero positive element $y\in A$ such that
	\[
	y\oplus \bigoplus_{g\in G}\alpha_g(y)\precsim x.
	\]
	In fact, since $A$ is simple and non type I, by \cite[Lemma 2.4]{1408.5546} there is a non-zero positive element $z\in A$ such that $z\otimes 1_{n+1}\precsim x$. Then by Lemma \ref{lem4} there is a non-zero positive element $y\in A$ such that $y\precsim \alpha_{g^{-1}}(z)$ for all $g\in G$. Hence $y\oplus \bigoplus_{g\in G}\alpha_g(y)\precsim z\otimes 1_{n+1}\precsim x$. This proves the claim.\\
	Set $\varepsilon_1=\text{min}\{\varepsilon/3,1/4n\}$, and define a continuous function $\varphi:[0,1]\rightarrow [0,1]$ by
	\begin{equation*}
		\varphi(t) = \begin{cases}
			\frac{1}{\varepsilon_1}t \quad & 0\leq t\leq \varepsilon_1\\
			1 \quad & \varepsilon_1\leq t \leq 1.
		\end{cases}
	\end{equation*}
	Applying Lemma \ref{lem3} with this function $\varphi$ and with $\varepsilon_1$ as given, we get $\delta>0$ such that whenever $E\subset A$ is a subalgebra, and $d\in A$ and $e\in E$ satisfy
	\[
	\|d\|\leq 1, 0\leq e\leq 1, \|[e,d]\|<\delta, \text{ and dist}(ed,E)<\delta,
	\]
	then
	\[
	\|[\varphi(e),d]\|<\varepsilon_1, \text{ and dist}(\varphi(e)d,E)<\varepsilon_1.
	\]
	Applying the hypothesis with $F$ as given, with $\delta$ in place of $\varepsilon$, and with $y$ in place of $x$. We can find a separable $C^*$-subalgebra $B\subset A$, a positive element $c\in A\cap B'$ with $\|c\|=1$, such that the following hold:
	\begin{enumerate}[(1$'$)]
		\item $\|ca-ac\|<\delta$ for all $a\in F$;
		\item $ca\in_\delta B$ for all $a\in F$;
		\item $1-c\precsim y$;
		\item $A_\omega\cap B'$ has real rank zero. 
	\end{enumerate}
	Since $\alpha$ has the weak tracial Rokhlin property, applying Proposition \ref{pro1}(2) with $F\cup B\cup \{c\}$ in place of $S$, and $y$ in place of $x$, there are pairwise orthogonal positive contractions $(f_g)_{g\in G}$ in $A_\omega\cap(F\cup B\cup \{c\})'$ such that, with $f=\sum_{g\in G}f_g$, the following hold:
	\begin{enumerate}[(1$''$)]
		\item $(\alpha_\omega)_g(f_h)=f_{gh}$ for all $g,h\in G$;
		\item $1-f\precsim y$;
		\item $\|faf\|=\|a\|$ for all $a\in F\cup B\cup \{c\}$. 
	\end{enumerate}
	Set $c_1=(c-\varepsilon_1)_+,c_2=\varphi(c)$, then we have $c_1,c_2\in A\cap B',c_2c_1=c_1$ and $\|c_1-c\|\leq \varepsilon_1$.
	Since $(f_g)_{g\in G}$ are pairwise orthogonal and all commute with $c_1$, we have $\|fc_1f\|=\text{max}\{\|f_gc_1f_g\|:g\in G\}$. Hence there exists $g_0\in G$ such that $\|f_{g_0}c_1f_{g_0}\|=\|fc_1f\|=\|c_1\| \geq 1-\varepsilon_1>0$. Since $A_\omega\cap B'$ has real rank zero, by \cite[Theorem 3.2.5]{MR1884366} there is a non-zero projection $p_1\in \overline{c_1f_{g_0}(A_\omega\cap B')f_{g_0}c_1}$ such that
	\[
	\|p_1c_1f_{g_0}p_1-c_1f_{g_0}\|<1/{4n}.
	\]
	Set $p_g=(\alpha_\omega)_g(p_1)$ for $g\in G\setminus\{1\}$, and $p=\sum_{g\in G}p_g$. Since $p_1\in \overline{c_1f_{g_0}(A_\omega\cap B')f_{g_0}c_1}\subset \overline{f_{g_0}A_\omega f_{g_0}}$, we have $p_g\in \overline{f_{gg_0}A_\omega f_{gg_0}}$. Thus $(p_g)_{g\in G}$ is a family of non-zero orthogonal projections in $A_\omega$. We will show that $(p_g)_{g\in G}$ satisfy (1)\textendash(3).\\
	Note that (2) follows from the definition of $p_g$. To see (1), let $a\in F$. By (1$'$) and the choice of $\delta$, we have $\|[c_2,a]\|<\varepsilon_1$. By (2$'$) and the choice of $\delta$, there is $b\in B$ such that $\|c_2a-b\|<\varepsilon_1$. Since $c_2c_1=c_1$ and $p_1\in \overline{c_1f_{g_0}(A_\omega\cap B')f_{g_0}c_1}$, we have $c_2p_1=p_1=p_1c_2$. Thus
	\begin{align*}
		\|p_1a-ap_1\| & =\|p_1c_2a-ac_2p_1\|\\
		& \leq \|p_1c_2a-p_1b\|+\|p_1b-bp_1\|+\|bp_1-c_2ap_1\|+\|c_2ap_1-ac_2p_1\|\\
		& \leq \|p_1\|\cdot\|c_2a-b\|+\|b-c_2a\|\cdot\|p_1\|+\|c_2a-ac_2\|\cdot\|p_1\|\\
		& < 3\varepsilon_1\leq \varepsilon.
	\end{align*}
	Since $a\in F$ is arbitrary, we get $\|p_1a-ap_1\|<\varepsilon$ for all $a\in F$. Using (2) and $\alpha_g(F)=F$ for all $g\in G$, we have
	\begin{align*}
		\|p_ga-ap_g\| & =\|(\alpha_\omega)_g[p_1\alpha_{g^{-1}}(a)-\alpha_{g^{-1}}(a)p_1]\|\\
		& =\|p_1\alpha_{g^{-1}}(a)-\alpha_{g^{-1}}(a)p_1\|<\varepsilon.
	\end{align*}
	This proves (1). To see (3), note that
	\[
	1-p=1-\sum_{g\in G}p_g\leq 1-\sum_{g\in G}p_g(\alpha_\omega)_g(c_1f_{g_0})p_g,
	\]
	and
	\begin{align*}
		& \|[1-\sum_{g\in G}p_g(\alpha_\omega)_g(c_1f_{g_0})p_g]-[1-f+\sum_{g\in G}f_{gg_0}^{1/2}(\alpha_\omega)_g(1-c)f_{gg_0}^{1/2}]\|\\
		={} & \|[1-\sum_{g\in G}(\alpha_\omega)_g(p_1c_1f_{g_0}p_1)]-[1-\sum_{g\in G}(\alpha_\omega)_g(cf_{g_0})]\|\\
		\leq{} & \sum_{g\in G}\|(\alpha_\omega)_g[p_1c_1f_{g_0}p_1-cf_{g_0}]\|\\
		\leq{} & n\|p_1c_1f_{g_0}p_1-c_1f_{g_0}\|+n\|c_1-c\|\cdot\|f_{g_0}\|\\
		<{} & 1/4+n\varepsilon_1\leq 1/2.
	\end{align*}
	Thus, using Lemma \ref{lem1}(3) at the second step, Lemma \ref{lem1}(1) at the third step, (3$'$) and (2$''$) in the fourth step, we get
	\begin{align*}
		1-p & \sim_{A_\omega} (1-p-1/2)_+\\
		& \precsim_{A_\omega} (1-\sum_{g\in G}p_g(\alpha_\omega)_g(c_1f_{g_0})p_g-1/2)_+\\
		& \precsim_{A_\omega} 1-f+\sum_{g\in G}f_{gg_0}^{1/2}(\alpha_\omega)_g(1-c)f_{gg_0}^{1/2}\\
		& \precsim_{A_\omega} y\oplus \bigoplus_{g\in G}(\alpha_\omega)_g(y)\\
		& \precsim_{A_\omega} x.
	\end{align*}
	This proves (3).
\end{proof}

\begin{remark}~
	\begin{enumerate}[(1)]
		\item Every simple unital $\rm{C}^*$-algebra with tracial rank zero satisfies the conditions in Theorem \ref{the2}. In fact, if $A$ is a simple unital $\rm{C}^*$-algebra with tracial rank zero, then there exist a finite dimensional subalgebra $B\subset A$ and a positive element $c=1_B$ satisfying Condition (1)\textendash(3) in Theorem \ref{the2}. Since $A$ has real rank zero(see \cite[Theorem 3.6.11]{MR1884366}), then $A_\omega$ also has real rank zero. Note that $A_\omega\cap B'$ is a direct sum of unital corners of $A_\omega$, so it has real rank zero. Condition (4) holds. 
		\item Recall that a Kirchberg algebra is a purely infinite, simple, nuclear, separable $\rm{C}^*$-algebra. Every unital Kirchberg algebra satisfies the conditions in Theorem \ref{the2}. In fact, if $A$ is a unital Kirchberg algebra, then $A_\omega\cap A'$ is simple and purely infinite by \cite[Theorem 2.12(3)]{MR2265050}, thus it has real rank zero by \cite{MR1010004}. We can take $B=A$ and $c=1_A$ to satisfy Condition (1)\textendash(4) in Theorem \ref{the2}.
	\end{enumerate}
\end{remark}

\begin{remark} Let $A$ be an infinite-dimensional simple unital $\rm{C}^*$-algebra, let $G$ be a finite group, and let $\alpha:G\rightarrow \text{Aut}(A)$ be an action of $G$ on $A$.
	\begin{enumerate}[(1)]
		\item If $A$ has tracial rank zero, then $\alpha$ has the tracial Rokhlin property if and only if $\alpha$ has the weak tracial Rokhlin property. This result was obtained by Phillips in \cite[Theorem 1.9]{MR3347172}.
		\item If $A$ is a Kirchberg algebra, then $\alpha$ has the tracial Rokhlin property if and only if $\alpha$ has the weak tracial Rokhlin property. This result was obtained by Gardella, Hirshberg and Santiago in \cite[Theorem 3.11]{MR4177290}. Indeed, they showed that $\alpha$ has the weak tracial Rokhlin property if and only if $\alpha$ is pointwise outer.
	\end{enumerate}
\end{remark}

\section*{Acknowledgements}
This work was supported by National Natural Science Foundation of China [Grant no. 11871375].




\bibliographystyle{plain}
\bibliography{ref_rp}

\end{document}